\date{\today}
\newtheorem{theorem}{Theorem}[section]
\newtheorem{proposition}[theorem]{Proposition}
\newtheorem{corollary}[theorem]{Corollary}
\theoremstyle{definition}
\newtheorem{example}[theorem]{Example}%[section]
\newtheorem{remark}[theorem]{Remark}%[section]
\newtheorem{definition}[theorem]{Definition}%[section]
\begin{document}

\title[On closures in semitopological inverse semigroups with continuous inversion]{On closures in semitopological inverse semigroups with continuous inversion}

\author{Oleg~Gutik}
\address{Faculty of Mechanics and Mathematics, National University of Lviv,
Universytetska 1, Lviv, 79000, Ukraine}
\email{o\underline{\hskip5pt}\,gutik@franko.lviv.ua,
ovgutik@yahoo.com}

\keywords{Semigroup, semitopological semigroup, topological Brandt $\lambda^0$-extension, inverse semigroup, quasitopological group, topological group, semilattice, closure, $H$-closed, absolutely $H$-closed.}

\subjclass[2010]{Primary 22A05, 22A15, 22A26; Secondary 20M18, 20M15, 54D30, 54H11, 54H12.}

\begin{abstract}
We study the closures of subgroups, semilattices and different kinds of semigroup extensions in semitopological inverse semigroups with continuous inversion. In particularly we show that a topological group $G$ is $H$-closed in the class of semitopological inverse semigroups with continuous inversion if and only if $G$ is compact, a Hausdorff linearly ordered topological semilattice $E$ is $H$-closed in the class of semitopological semilattices if and only if $E$ is $H$-closed in the class of topological semilattices, and a topological Brandt $\lambda^0$-extension of $S$ is (absolutely) $H$-closed in the class of semitopological inverse semigroups with continuous inversion if and only if so is $S$. Also, we construct an example of an $H$-closed non-absolutely $H$-closed semitopological semilattice in the class of semitopological semilattices.
\end{abstract}

\maketitle

\section{Introduction and preliminaries}

We shall follow the terminology of \cite{ArhangelskiiTkachenko2008, CliffordPreston1961-1967, GierzHofmannKeimelLawsonMisloveScott2003, Petrich1984, Ruppert1984}.

A subset $A$ of an infinite set $X$ is called \emph{cofinite in $X$} if $X\setminus A$ is finite.

Given a semigroup $S$, we shall denote the set of idempotents of $S$ by $E(S)$. A \emph{semilattice} is a commutative semigroup of idempotents. For a semilattice $E$  the semilattice operation on $E$ determines the partial order $\leqslant$ on $E$: $$e\leqslant f\quad\text{if and only if}\quad ef=fe=e.$$ This order is called {\em natural}. An element $e$ of a partially ordered set $X$ is
called {\em minimal} if $f\leqslant e$  implies $f=e$ for $f\in X$. An idempotent $e$ of a semigroup $S$ without zero (with zero $0_S$) is called \emph{primitive} if $e$ is a minimal element in $E(S)$ (in $(E(S))\setminus\{0_S\}$).  A \emph{maximal chain} of a semilattice $E$ is a chain which is properly contained in no other chain of $E$. The Axiom of Choice implies the existence of maximal chains in any partially ordered set.

A semigroup $S$ with the adjoined unit [zero] will be denoted by $S^1$ [$S^0$] (cf. \cite{CliffordPreston1961-1967}). Next, we shall denote the unit (identity) and the zero of a semigroup $S$ by $1_S$ and $0_S$, respectively. Given a subset $A$ of a semigroup $S$, we shall denote by $A^*=A\setminus\{ 0_S\}$ and $|A|=$ the cardinality of $A$. A semigroup $S$ is called \emph{inverse} if for any $x\in S$ there exists a unique $y\in S$ such that $xyx=x$ and $yxy=y$. Such an element $y$ is called \emph{inverse} of $x$ and it is denoted by $x^{-1}$.

If $h\colon S\rightarrow T$ is a homomorphism (or a map) from a semigroup $S$ into a semigroup $T$ and if $s\in S$, then we denote the image of $s$ under $h$ by $(s)h$. A semigroup homomorphism $h\colon S\rightarrow T$ is called \emph{annihilating} if $(s)h=(t)h$ for all $s, t\in S$.

Let $S$ be a semigroup with zero and $\lambda$ a cardinal $\geqslant 1$. We define the semigroup operation on the set $B_{\lambda}(S)=(\lambda\times S\times {\lambda})\cup\{ 0\}$ as follows:
\begin{equation*}
 (\alpha,a,\beta)\cdot(\gamma, b, \delta)=
  \begin{cases}
    (\alpha, ab, \delta), & \text{ if } \beta=\gamma; \\
    0, & \text{ if } \beta\ne \gamma,
  \end{cases}
\end{equation*}
and $(\alpha, a, \beta)\cdot 0=0\cdot(\alpha, a, \beta)=0\cdot
0=0,$ for all $\alpha, \beta, \gamma, \delta\in {\lambda}$ and $a,
b\in S$. If $S=S^1$ then the semigroup $B_\lambda(S)$ is called
the {\it Brandt $\lambda$-extension of the semigroup}
$S$~\cite{Gutik1999}. Obviously, if $S$ has zero
then ${\mathcal J}=\{ 0\}\cup\{(\alpha, 0_S, \beta)\mid 0_S$ is
the zero of $S\}$ is an ideal of $B_\lambda(S)$. We put
$B^0_\lambda(S)=B_\lambda(S)/{\mathcal J}$ and the semigroup
$B^0_\lambda(S)$ is called the {\it Brandt $\lambda^0$-extension
of the semigroup $S$ with zero}~\cite{GutikPavlyk2006}.

Next, if
$A\subseteq S$ then we shall denote $A_{\alpha\beta}=\{(\alpha, s,
\beta)\mid s\in A \}$ if $A$ does not contain zero, and
$A_{\alpha,\beta}=\left\{(\alpha, s, \beta)\mid s\in A\setminus\{ 0\} \right\}\cup \{ 0\}$ if $0\in A$, for $\alpha, \beta\in {\lambda}$.

We shall denote the semigroup of ${\lambda}{\times}{\lambda}$-matrix units by $B_\lambda$ and the subsemigroup of ${\lambda}\times{\lambda}$-matrix units of the Brandt $\lambda^0$-extension of a monoid $S$ with zero by $B^0_\lambda(1)$. We always consider the Brandt $\lambda^0$-extension only of a monoid with zero. Obviously, for
any monoid $S$ with zero we have $B^0_1(S)=S$. Note that every Brandt $\lambda$-extension of a group $G$ is isomorphic to the Brandt $\lambda^0$-extension of the group $G^0$ with adjoined zero. The Brandt $\lambda^0$-extension of the group with adjoined zero is called a \emph{Brandt  semigroup}~\cite{CliffordPreston1961-1967, Petrich1984}. A semigroup $S$ is a Brandt semigroup if and only if $S$ is a completely $0$-simple inverse semigroup~\cite{Clifford1942, Munn1957} (cf.  also \cite[Theorem~II.3.5]{Petrich1984}). We also observe that the semigroup $B_\lambda$ of ${\lambda \times\lambda}$-matrix units is isomorphic to the Brandt $\lambda^0$-extension of the two-element monoid with zero $S=\{ 1_S, 0_S\}$ and the
trivial semigroup $S$ (i.~e. $S$ is a singleton set) is isomorphic to the Brandt $\lambda^0$-extension of $S$ for every cardinal $\lambda\geqslant 1$.

Let $\left\{S_{\iota}\colon \iota\in\mathscr{I}\right\}$ be a disjoint family of semigroups with zero such that $0_\iota$ is zero in $S_{\iota}$ for any $\iota\in\mathscr{I}$. We put $S=\{0\}\cup\bigcup\left\{S_{\iota}^*\colon \iota\in\mathscr{I}\right\}$, where $0\notin\bigcup\left\{S_{\iota}^*\colon \iota\in\mathscr{I}\right\}$, and define a semigroup  operation ``\;$\cdot$\;'' on $S$ in the following way
\begin{equation*}
    s\cdot t=
\left\{
  \begin{array}{cl}
    st, & \hbox{if } st\in S_{\iota}^* \hbox{ for some } \iota\in\mathscr{I};\\
    0, & \hbox{otherwise}.
  \end{array}
\right.
\end{equation*}
The semigroup $S$ with the operation ``\;$\cdot$\;'' is called an \emph{orthogonal sum} of the semigroups $\left\{S_{\iota}\colon \iota\in\mathscr{I}\right\}$ and in this case we shall write $S=\sum_{\iota\in\mathscr{I}}S_{\iota}$.

A non-trivial inverse semigroup is called a \emph{primitive inverse semigroup} if all its non-zero idempotents are primitive~\cite{Petrich1984}. A semigroup $S$ is a primitive inverse semigroup if and only if $S$ is an orthogonal sum of Brandt semigroups~\cite[Theorem~II.4.3]{Petrich1984}.

In this paper all topological spaces are Hausdorff. If $Y$ is a subspace of a topological space $X$ and $A\subseteq Y$, then by $\operatorname{cl}_Y(A)$ we denote the topological closure of $A$ in $Y$.

A ({\it semi})\emph{topological semigroup} is a Hausdorff topological space with a (separately) continuous semigroup operation. A topological semigroup which is an inverse semigroup is called an \emph{inverse topological semigroup}. A \emph{topological inverse semigroup} is an inverse topological semigroup with continuous inversion. We observe that the inversion on a (semi)topological inverse semigroup is a homeomorphism (see \cite[Proposition~II.1]{EberhartSelden1969}). A \emph{semitopological group} is a Hausdorff topological space with a separately continuous group operation. A semitopological group with continuous inversion is a \emph{quasitopological group}. A \emph{paratopological group} is called a group with a continuous group operation. A paratopological group with continuous inversion is a \emph{topological group}.

Let $\mathfrak{STSG}_0$ be a class of semitopological semigroups. A semigroup $S\in\mathfrak{STSG}_0$ is called {\it $H$-closed in} $\mathfrak{STSG}_0$, if $S$ is a closed subsemigroup of any topological semigroup $T\in\mathfrak{STSG}_0$ which contains $S$ both as a subsemigroup and as a topological space. The $H$-closed
topological semigroups were introduced by Stepp in \cite{Stepp1969}, and there they were called {\it maximal semigroups}. A semitopological semigroup $S\in\mathfrak{STSG}_0$ is called {\it absolutely $H$-closed in the class} $\mathfrak{STSG}_0$, if any continuous homomorphic image of $S$ into $T\in\mathfrak{STSG}_0$ is $H$-closed in $\mathfrak{STSG}_0$. An algebraic semigroup $S$ is called:
\begin{itemize}
  \item {\it algebraically complete in} $\mathfrak{STSG}_0$, if $S$ with any Hausdorff topology $\tau$ such that $(S,\tau)\in\mathfrak{STSG}_0$ is $H$-closed in $\mathfrak{STSG}_0$;
  \item {\it algebraically $h$-complete in} $\mathfrak{STSG}_0$, if $S$
with discrete topology ${\mathfrak{d}}$ is absolutely $H$-closed in $\mathfrak{STSG}_0$ and $(S,{\mathfrak{d}})\in\mathfrak{STSG}_0$.
\end{itemize}
Absolutely $H$-closed topological semigroups and algebraically $h$-complete semigroups were introduced by Stepp in~\cite{Stepp1975}, and there they were called {\it absolutely maximal} and {\it algebraic maximal}, respectively.

Recall \cite{Aleksandrov1942}, a topological group $G$ is called {\it absolutely
closed} if $G$ is a closed subgroup of any topological group which contains $G$ as a subgroup. In our terminology such topological groups are called $H$-closed in the class of topological groups. In \cite{Raikov1946} Raikov proved that a topological group $G$ is absolutely closed if and only if it is Raikov complete, i.e. $G$
is complete with respect to the two-sided uniformity. A topological group $G$ is called {\it $h$-complete} if for every continuous homomorphism $h\colon G\to H$ the subgroup $f(G)$ of $H$ is closed~\cite{DikranjanUspenskij1998}. In our terminology such topological groups are called absolutely $H$-closed in the class of topological groups. The $h$-completeness is preserved under taking products and closed central subgroups~\cite{DikranjanUspenskij1998}. $H$-closed paratopological and topological groups in the class of paratopological groups studied in \cite{Ravsky2003}.

In \cite{Stepp1975} Stepp studied $H$-closed topological semilattice in the class of topological semigroups. There he proved that an algebraic semilattice $E$ is algebraically $h$-complete in the class of topological semilattices if and only if every chain in $E$ is finite. In \cite{GutikRepovs2008} Gutik and Repov\v{s} established the closure of a linearly ordered topological semilattice in a topological semilattice. They proved the criterium of $H$-closedness of a linearly ordered topological semilattice in the class of topological semilattices and showed that every $H$-closed topological semilattice is absolutely $H$-closed in the class of topological semilattices. Also, such semilattices studied in \cite{ChuchmanGutik2007, GutikPagonRepovs2010}. In \cite{BardylaGutik2012} the structure of closures of the discrete semilattices $(\mathbb{N},\min)$ and $(\mathbb{N},\max)$ is described. Here the authors constructed an example of an $H$-closed topological semilattice in the class of topological semilattices which is not absolutely $H$-closed in the class of topological semilattices. The constructed example gives a negative answer on Question~17 from \cite{Stepp1975}.

\begin{definition}[\cite{GutikPavlyk2006}]\label{def2}
Let $\mathfrak{STSG}_0$ be a class of semitopological semigroups.
Let $\lambda\geqslant 1$ be a cardinal and $(S,\tau)\in\mathfrak{STSG}_0$. Let $\tau_{B}$ be a topology on $B^0_{\lambda}(S)$ such that
\begin{itemize}
  \item[a)] $\left(B^0_{\lambda}(S),            \tau_{B}\right)\in\mathfrak{STSG}_0$;
  \item[b)] the topological subspace $(S_{\alpha,\alpha},\tau_{B}|_{S_{\alpha,\alpha}})$ is naturally homeomorphic to $(S,\tau)$ for some $\alpha\in{\lambda}$.
\end{itemize}
Then $\left(B^0_{\lambda}(S), \tau_{B}\right)$ is called a {\it topological Brandt $\lambda^0$-extension of $(S, \tau)$ in $\mathfrak{STSG}_0$}.
\end{definition}

In the paper \cite{GutikRepovs2010} Gutik and Repov\v{s} established homomorphisms of the Brandt $\lambda^0$-extensions of monoids with zeros. They also described a category whose objects are ingredients in the constructions of the Brandt $\lambda^0$-extensions of monoids with zeros. Here they introduced finite, compact topological Brandt $\lambda^0$-extensions of topological semigroups and countably compact topological Brandt $\lambda^0$-extensions of topological inverse semigroups in the class of topological inverse semigroups, and established the structure of such extensions and non-trivial continuous homomorphisms between such topological Brandt $\lambda^0$-extensions of topological  monoids with zero. There they also described a category whose objects are ingredients in the constructions of finite (compact,
countably compact) topological Brandt $\lambda^0$-extensions of topological  monoids with zeros. These  investigations were continued in \cite{GutikPavlyk2013a, GutikPavlykReiter2009, GutikRavsky20??}, where established countably compact topological Brandt $\lambda^0$-extensions of topological monoids with zeros and pseudocompact topological Brandt $\lambda^0$-extensions of semitopological monoids with zeros their corresponding categories. In the papers \cite{BerezovskiGutikPavlyk2010, GutikPavlyk2001, GutikPavlyk2003, GutikPavlyk2006, Pavlyk2004} were studied $H$-closed and absolutely $H$-closed topological Brandt $\lambda^0$-extensions of topological semigroups in the class of topological semigroups.

In Section~\ref{s2} we study the closure of a quasitopological group in a semitopological inverse semigroup with continuous inversion. In particularly we show that a topological group $G$ is $H$-closed in the class of semitopological inverse semigroups with continuous inversion if and only if $G$ is compact.

Section~\ref{s3} is devoted to the closure of a semitopological semilattice in a semitopological inverse semigroup with continuous inversion. We show that a Hausdorff linearly ordered topological semilattice $E$ is $H$-closed in the class of semitopological semilattices if and only if $E$ is $H$-closed in the class of topological semilattices. Also, we construct an example of an $H$-closed semitopological semilattice in the class of semitopological semilattices which is not absolutely $H$-closed in the class of semitopological semilattices.

In Section~\ref{s4} we show that a topological Brandt $\lambda^0$-extension of $S$ is (absolutely) $H$-closed in the class of semitopological inverse semigroups with continuous inversion if and only if so is $S$. Also, we study the preserving of (absolute) $H$-closedness in the class of semitopological inverse semigroups with continuous inversion by orthogonal sums.

%%%%%%%%%%%%%%%%%%%%%%%%%%%%%%%%%%%%%%%%%%%%%%%%%%%%%

\section{On the closure of a quasitopological group in a semitopological inverse semigroup with continuous inversion}\label{s2}

\begin{proposition}\label{proposition-2.1}
Every left topological inverse semigroup with continuous inversion is semitopological semigroup.
\end{proposition}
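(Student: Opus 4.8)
The plan is to establish separate continuity of the multiplication directly. Continuity of every left translation $\lambda_a\colon x\mapsto ax$ (for fixed $a\in S$) is part of the hypothesis that $S$ is left topological, so it suffices to verify that every right translation $\rho_a\colon x\mapsto xa$ is continuous as well. Write $\iota\colon S\to S$ for the inversion $x\mapsto x^{-1}$, which is continuous by assumption.

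The decisive observation is algebraic. In an inverse semigroup one has $(xy)^{-1}=y^{-1}x^{-1}$, and because $\iota$ is an involution, i.e. $(x^{-1})^{-1}=x$, it follows that for each fixed $a$ and all $x\in S$
\[
xa=\bigl((xa)^{-1}\bigr)^{-1}=\bigl(a^{-1}x^{-1}\bigr)^{-1}.
\]
This identity says precisely that $\rho_a=\iota\circ\lambda_{a^{-1}}\circ\iota$. Hence each right translation is a composition of three continuous self-maps of $S$, namely inversion, the left translation $\lambda_{a^{-1}}$, and inversion again, and is therefore continuous.

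Once both families $\{\lambda_a\colon a\in S\}$ and $\{\rho_a\colon a\in S\}$ are known to consist of continuous maps, the multiplication $S\times S\to S$ is separately continuous; together with the standing Hausdorff assumption this is exactly the statement that $S$ is a semitopological semigroup. The roles of left and right are symmetric here, so the same computation works verbatim under the opposite convention for ``left topological'', merely interchanging $\lambda_a$ and $\rho_a$.

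I do not anticipate a real obstacle: the argument reduces entirely to the single identity $\rho_a=\iota\circ\lambda_{a^{-1}}\circ\iota$, which uses only that inversion reverses products and is a continuous involution. The one point deserving a word of care is that a continuous involution is automatically a homeomorphism, so one may freely pre- and post-compose with $\iota$ without jeopardising continuity.
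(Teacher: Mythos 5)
Your proof is correct and is essentially identical to the paper's: the paper also establishes continuity of each right translation via the same factorization $\rho_a=\operatorname{\textbf{inv}}\circ\lambda_{a^{-1}}\circ\operatorname{\textbf{inv}}$, using the identity $xa=\left(a^{-1}x^{-1}\right)^{-1}$. Your additional remarks (that inversion is a continuous involution, hence a homeomorphism, and that left translations are continuous by hypothesis) only make explicit what the paper leaves implicit.
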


\begin{proof}
We write an arbitrary right translation $\rho_a\colon S\to S\colon x\mapsto xa$ of a left topological inverse semigroup $S$ with continuous inversion $\operatorname{\textbf{inv}}\colon S\to S$ on three steps in the following way:
\begin{equation*}
    \rho_a(x)=xa=\left(a^{-1}x^{-1}\right)^{-1}= \left(\operatorname{\textbf{inv}}\circ\lambda_{a^{-1}}\circ \operatorname{\textbf{inv}}\right)(x).
\end{equation*}
This implies the continuity of right translations i $S$.
\end{proof}

It is well known that the closure of an inverse subsemigroup of a topological inverse semigroup is again a topological inverse semigroup (see: \cite[Proposition~II.1]{EberhartSelden1969}). The following proposition extends this result to semitopological inverse semigroups with continuous inversion.

\begin{proposition}\label{proposition-2.2}
The closure of an inverse subsemigroup $T$ in a semitopological inverse semigroup $S$ with continuous inversion is an inverse semigroup.
\end{proposition}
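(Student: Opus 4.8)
The plan is to show that the closure $\operatorname{cl}_S(T)$ is closed under both the semigroup multiplication and under inversion, and that on this closure every element has a unique inverse, so that it is in fact an inverse subsemigroup of $S$. I would first recall the two general facts I am allowed to use: the inversion map $\operatorname{\textbf{inv}}\colon S\to S$ is continuous (indeed a homeomorphism, by \cite[Proposition~II.1]{EberhartSelden1969}), and by Proposition~\ref{proposition-2.1} together with the hypothesis, $S$ is a semitopological semigroup, so each left and right translation is continuous. These continuity properties are exactly what lets topological closure interact well with the algebraic operations.

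Next I would verify that $\operatorname{cl}_S(T)$ is a subsemigroup. Here the standard two-step argument for separately continuous multiplication applies: fix $t\in T$; since the right translation $\rho_t$ is continuous and $T\cdot t\subseteq T$, continuity gives $\operatorname{cl}_S(T)\cdot t\subseteq\operatorname{cl}_S(T\cdot t)\subseteq\operatorname{cl}_S(T)$. Now fix $x\in\operatorname{cl}_S(T)$; using the previous inclusion we have $T\cdot x\subseteq\operatorname{cl}_S(T)$, and by continuity of the left translation $\lambda_x$ we obtain $x\cdot\operatorname{cl}_S(T)=\lambda_x(\operatorname{cl}_S(T))\subseteq\operatorname{cl}_S(\lambda_x(T))=\operatorname{cl}_S(x\cdot T)\subseteq\operatorname{cl}_S(\operatorname{cl}_S(T))=\operatorname{cl}_S(T)$. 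Hence $\operatorname{cl}_S(T)$ is closed under multiplication. Closure under inversion is immediate from continuity of $\operatorname{\textbf{inv}}$: since $\operatorname{\textbf{inv}}$ is a homeomorphism fixing $T$ setwise (as $T$ is inverse), it maps $\operatorname{cl}_S(T)$ onto $\operatorname{cl}_S(T)$.

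It remains to establish that $\operatorname{cl}_S(T)$ is an \emph{inverse} semigroup, i.e.\ that every $x\in\operatorname{cl}_S(T)$ has a unique inverse within $\operatorname{cl}_S(T)$. Existence is automatic: $\operatorname{cl}_S(T)$ sits inside the inverse semigroup $S$ and is closed under the ambient inversion, so the element $x^{-1}\in S$ lies in $\operatorname{cl}_S(T)$ and satisfies $xx^{-1}x=x$, $x^{-1}xx^{-1}=x^{-1}$. For uniqueness I would invoke the standard characterization that a semigroup is inverse if and only if it is regular and its idempotents commute: regularity of $\operatorname{cl}_S(T)$ follows from the existence of $x^{-1}$ just noted, and commutativity of idempotents in $\operatorname{cl}_S(T)$ is inherited from $S$, since the idempotents of $\operatorname{cl}_S(T)$ form a subset of the commuting idempotents $E(S)$. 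Thus $\operatorname{cl}_S(T)$ is regular with commuting idempotents and is therefore an inverse semigroup.

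The main obstacle is entirely in the second step: because multiplication is only \emph{separately} continuous, one cannot close up both variables simultaneously, and the naive estimate $\operatorname{cl}_S(T)\cdot\operatorname{cl}_S(T)\subseteq\operatorname{cl}_S(T\cdot T)$ is not directly available. The fix is the two-stage passage through one fixed translation at a time, first absorbing a single element $t\in T$ and only afterwards a general closure element $x$, which is exactly why Proposition~\ref{proposition-2.1} (continuity of \emph{both} translations) is needed rather than mere left-topological structure. Once separate continuity is exploited in this ordered fashion, the remaining inverse-semigroup verification is routine and purely algebraic.
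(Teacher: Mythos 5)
Your proof is correct and takes essentially the same route as the paper's: the paper simply cites Ruppert's Proposition~I.1.8$(ii)$ for the fact that the closure of a subsemigroup of a semitopological semigroup is a subsemigroup (your two-step translation argument is exactly the proof of that cited result), then uses continuity of inversion to get $\operatorname{\textbf{inv}}(\operatorname{cl}_S(T))=\operatorname{cl}_S(T)$, leaving the final algebraic verification (which you spell out via regularity plus commuting idempotents) implicit. The only blemish is the slip ``$T\cdot x\subseteq\operatorname{cl}_S(T)$'', which should read $x\cdot T\subseteq\operatorname{cl}_S(T)$ --- the inclusion your subsequent chain of closures actually uses.
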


\begin{proof}
By Proposition~1.8$(ii)$ from \cite[Chapter~I, Proposition~1.8$(ii)$]{Ruppert1984} the closure $\operatorname{cl}_S(T)$ of $T$ in a semitopological semigroup $S$ is a semitopological semigroup. Then the continuity of the inversion $\operatorname{\textbf{inv}}\colon S\to S$ and Theorem~1.4.1 from \cite{Engelking1989} imply that $\operatorname{\textbf{inv}}(\operatorname{cl}_S(T))\subseteq \operatorname{cl}_S(\operatorname{\textbf{inv}}(T))= \operatorname{cl}_S(T)$ and hence we get that $\operatorname{\textbf{inv}}(\operatorname{cl}_S(T))= \operatorname{cl}_S(T)$. This implies that $\operatorname{cl}_S(T)$ is an inverse subsemigroup of $S$.
\end{proof}

We observe that the statement of Proposition~\ref{proposition-2.2} is not true in the case of inverse topological semigroup. It is complete to consider the set $\mathbb{R}^+=[0,+\infty)$ of non-negative real numbers with usual topology and usual multiplication of real numbers. This implies that in Proposition~\ref{proposition-2.2} the condition that $S$ has continuous inversion is essential.
	
In a compact topological semigroup the closure of a subgroup is a topological subgroup (see: \cite[Vol.~1, Theorems~1.11 and 1.13]{CarruthHildebrantKoch}). Also, since for a topological inverse semigroup $S$ the map $f\colon S\to S\colon x\rightarrow xx^{-1}$ is continuous, the maximal subgroup of $S$ is closed, and hence the closure of a subgroup of a topological inverse semigroup is a subgroup. The previous observation implies that this is not true in the general case of topological semigroups. Also, the following example shows that the closure of a subgroup in a semitopological inverse semigroup with continuous inversion is not a subgroup.

\begin{example}\label{example-2.3}
Let $\mathbb{Z}$ be the discrete additive group of integers. We put $\mathscr{A}(\mathbb{Z})$ is the one point Alexandroff compactification of the space $\mathbb{Z}$ with the remainder $\infty$. We extend the semigroup operation from $\mathbb{Z}$ onto $\mathscr{A}(\mathbb{Z})$ in the following way:
\begin{equation*}
    n+\infty=\infty+n=\infty+\infty=\infty, \qquad \mbox{for every } \; n\in \mathbb{Z}.
\end{equation*}
It is well known that $\mathscr{A}(\mathbb{Z})$ with such defined operation is a semitopological inverse semigroup with continuous inversion and $\mathbb{Z}$ is not a closed subgroup of $\mathscr{A}(\mathbb{Z})$ \cite{Ruppert1984}.
\end{example}

A quasitopological group $G$ is called \emph{precompact} if for every open neighbourhood $U$ of the neutral element of $G$ there exists a finite subset $F$ of $G$ such that $UF=G$ \cite{ArhangelskiiTkachenko2008}.

The following proposition gives examples quasitopological groups which are non-closed subgroups of some semitopological inverse semigroups with continuous inversion.

\begin{proposition}\label{proposition-2.4}
For every non-precompact regular quasitopological group $(G,\tau)$ there exists a regular semitopological inverse semigroup with continuous inversion which contains $(G,\tau)$ as a non-closed subgroup.
\end{proposition}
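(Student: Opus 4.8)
The plan is to realise $(G,\tau)$ as a non-closed subgroup of the inverse semigroup $G^{0}=G\cup\{0\}$ obtained by adjoining an absorbing zero $0$, which will play the role of the point $\infty$ in Example~\ref{example-2.3}. Algebraically this is immediate: $G^{0}$ is an inverse semigroup whose only idempotents $1_{G}$ and $0$ commute. The work is to topologise $S:=G^{0}$ so that $G$ stays open with its topology $\tau$, so that $0\in\operatorname{cl}_{S}(G)\setminus G$, and so that $S$ becomes a \emph{regular semitopological inverse semigroup with continuous inversion}. I would do this by fixing an ideal $\mathcal{K}$ of subsets of $G$ (closed under subsets and finite unions) and declaring a base of neighbourhoods of $0$ to be the sets $\{0\}\cup(G\setminus K)$ for $K\in\mathcal{K}$, while $G$ keeps $\tau$.

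Writing out the required continuities pins down the closure properties $\mathcal{K}$ must have. Continuity of the left translation $\lambda_{g}$ at $0$ forces $g^{-1}K\in\mathcal{K}$, and continuity of inversion at $0$ forces $K^{-1}\in\mathcal{K}$; by Proposition~\ref{proposition-2.1} the right translations are then automatically continuous (indeed left- and inversion-invariance already yield right-invariance, since $Kg=(g^{-1}K^{-1})^{-1}$). So $\mathcal{K}$ must be invariant under one-sided translations and under inversion. Hausdorffness reduces to separating $0$ from each $g\in G$, which needs an open $\tau$-neighbourhood of $g$ inside some member of $\mathcal{K}$; by invariance this is the single requirement that $\mathcal{K}$ contain a neighbourhood $W_{0}$ of the identity. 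Non-closedness of $G$, i.e. $0\in\operatorname{cl}_{S}(G)$, is exactly $G\setminus K\neq\emptyset$ for all $K\in\mathcal{K}$, that is $G\notin\mathcal{K}$. Regularity I would then deduce from regularity of $(G,\tau)$ using invariance, after choosing the generator $W_{0}$ so that $\operatorname{cl}_{G}(W_{0})\in\mathcal{K}$ (shrink $W_{0}$ to one with closed closure inside the original generator), which makes closed bounded neighbourhoods cofinal and handles the closed neighbourhoods of $0$ as well.

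Thus everything hinges on producing a fully invariant ideal that contains a neighbourhood of $e$ yet is proper. Since any invariant ideal containing $W_{0}$ must contain the smallest one, $\mathcal{I}(W_{0})=\{\,K:K\subseteq\bigcup_{i=1}^{m}g_{i}W_{0}h_{i}\text{ for some finite family}\,\}$, the whole problem is equivalent to finding a \emph{symmetric} neighbourhood $W_{0}=W_{0}^{-1}$ of $e$ (symmetrising via continuity of inversion, noting $(U_{0}\cap U_{0}^{-1})F\subseteq U_{0}F\neq G$) with $G\neq\bigcup_{i=1}^{m}g_{i}W_{0}h_{i}$ for every finite family. This is the one place where non-precompactness enters essentially, and it is the main obstacle. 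In the abelian case it is trivial: two-sided translates are ordinary translates, $g_{i}W_{0}h_{i}=W_{0}(g_{i}h_{i})$, so $\bigcup_{i}g_{i}W_{0}h_{i}=W_{0}F\neq G$ directly, and $W_{0}=U_{0}$ works. The difficulty is purely non-abelian: since $\bigcup_{i}g_{i}W_{0}h_{i}=\bigcup_{i}(g_{i}W_{0}g_{i}^{-1})(g_{i}h_{i})$ is a union of right-translates of \emph{conjugates} of $W_{0}$, and conjugation can enlarge $W_{0}$ enormously, finitely many such sets might a priori cover $G$ even though finitely many right-translates of $W_{0}$ never do.

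To break this I would argue that the expansion needed to cover $G$ by conjugate-translates must itself create an uncoverable direction. From non-precompactness I would take a maximal right-$U_{0}$-separated set $D$ (so $xy^{-1}\notin U_{0}$ for distinct $x,y\in D$, whence $U_{0}D=G$ and $D$ is infinite), and then seek $W_{0}\subseteq U_{0}$ for which no single two-sided translate absorbs infinitely many points of $D$: a pigeonhole applied to a hypothetical finite cover $G=\bigcup_{i}g_{i}W_{0}h_{i}$ would then give the contradiction. The technical crux is to arrange (choosing $D$, or adjusting $W_{0}$ by regularity) that each conjugate $g_{i}W_{0}W_{0}^{-1}g_{i}^{-1}$ meets the difference set $DD^{-1}$ in a bounded set. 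This is the measure-free substitute for the Haar-measure argument that settles the locally compact case — there a relatively compact $W_{0}$ has finite Haar measure, its finitely many two-sided translates have finite total measure, and a non-compact (hence infinite-measure) group cannot be so covered. Carrying this out in the general quasitopological setting, where there is neither a Haar measure nor, crucially, joint continuity to control the product $W_{0}W_{0}^{-1}$, is the step I expect to demand the most care.
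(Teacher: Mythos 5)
Your reduction is internally coherent: once you insist that the neighbourhood filter at $0$ be generated by an ideal $\mathcal{K}$ invariant under both translations and inversion, then Hausdorffness, regularity, separate continuity, continuity of inversion and non-closedness of $G$ do all collapse into the single statement that some symmetric open $W_0$ admits no finite cover $G=\bigcup_{i=1}^{m}g_iW_0h_i$. But that statement is exactly where your proof stops, and it is a genuine gap, not a deferred verification. ``No finite cover by two-sided translates'' is visibly stronger than non-precompactness (``no finite cover by one-sided translates''), and your sketched pigeonhole cannot bridge the difference: if some $g_iW_0h_i$ absorbs two points $x,y$ of your separated set $D$, you only learn $xy^{-1}\in g_iW_0W_0^{-1}g_i^{-1}$, so a contradiction would require $D$ to be separated with respect to \emph{every} conjugate of $W_0W_0^{-1}$ simultaneously, i.e.\ with respect to the invariant hull $\bigcup_{g}gW_0W_0^{-1}g^{-1}$. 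There are non-precompact Polish groups in which this hull is the whole group for every neighbourhood $W_0$ (in $\mathrm{Homeo}_{+}[0,1]$ with the uniform topology every element is conjugate to elements of arbitrarily small displacement), so no such $D$ exists and the sketch cannot be repaired in the stated form; the covering statement happens to hold for that group, but only by a completely different, ad hoc argument. Worse, in a quasitopological group there is no joint continuity, so you cannot even confine $W_0W_0^{-1}$ to a prescribed neighbourhood, and the basic objects of your plan are uncontrollable. As written, your proposal proves the proposition only for SIN (in particular abelian) groups, where two-sided translates reduce to one-sided ones, and, via the Haar-measure remark, for locally compact ones.

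The deeper miss is that your claimed equivalence (``the whole problem is equivalent to finding such a $W_0$'') holds only inside your invariant-ideal framework, and the paper's proof shows that framework is needlessly rigid. The paper also adjoins a zero, but instead of one invariant ideal it takes as a subbase at $0$ the complements of closures of \emph{one-sided} translates, $W_{g,V}=\{0\}\cup G\setminus\operatorname{cl}_G(gV)$ and $W_{V,g}=\{0\}\cup G\setminus\operatorname{cl}_G(Vg)$, where $V$ runs over symmetric basic neighbourhoods with $\operatorname{cl}_G(V)\subseteq U$ and $U$ witnesses non-precompactness. The translations are then made to act on the subbase itself rather than preserve one ideal: $h\cdot W_{g,V}=W_{hg,V}$, $W_{V,g}\cdot h=W_{V,gh}$, and inversion swaps the two families since $(W_{g,V})^{-1}=W_{V^{-1},g^{-1}}$. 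Consequently the only covering fact needed (for $0$ to be non-isolated, i.e.\ for $G$ to be dense) is that finitely many one-sided translates $g_i\operatorname{cl}_G(V_i)$, $\operatorname{cl}_G(V_j)g_j$ never cover $G$ --- a condition on $U$ itself, which non-precompactness with symmetric $V$'s is designed to supply --- and the group-covering problem about conjugates that blocks your proof never arises. (Even in the paper's route the ``mixed'' cases, e.g.\ a left translation applied to a right-indexed subbasic set, still need care, since they produce complements of two-sided translates; but the point stands that the strong lemma your approach forces is an artifact of demanding a single fully invariant ideal.)
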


\begin{proof}
Since the quasitopological group $(G,\tau)$ is non-precompact there exists an open neighbourhood $U$ of the neutral element $e$ of the group $G$ such that $FU\neq G$ and $UF\neq G$ for every finite subset $F$ in $G$. Let $\mathscr{B}_e$ be a base of the topology $\tau$ at the neutral element $e$ of $(G,\tau)$. Since the inversion is continuous in $(G,\tau)$, without loss of generality we may assume that all elements of the family $\mathscr{B}_e$ are symmetric, i.e., $V=V^{-1}$ for every $V\in\mathscr{B}_e$. We put
\begin{equation*}
    \mathscr{B}_U=\left\{ V\in\mathscr{B}_e\colon \operatorname{cl}_G(V)\subseteq U\right\}.
\end{equation*}
Since the quasitopological group $(G,\tau)$ is not precompact we have that $FV\neq G$ and $VF\neq G$ for every $V\in\mathscr{B}_U$ and for every finite subset $F$ in $G$.

By $G^0$ we denote the group $G$ with a joined zero $0$. Now, we put
\begin{equation*}
    \mathscr{P}_0=\left\{W_{g,V}=\{0\}\cup G\setminus\operatorname{cl}_G(gV)\colon V\in\mathscr{B}_U, g\in G \right\}\cup \left\{W_{V,g}=\{0\}\cup G\setminus\operatorname{cl}_G(Vg)\colon V\in\mathscr{B}_U, g\in G \right\}
\end{equation*}
and $\tau\cup\mathscr{P}_0$ is a subbase of a topology $\tau_0$ on $G^0$.

Since $(G,\tau)$ a quasitopological group, it is sufficient to show that the semigroup operation on $(G^0,\tau_0)$ is separately continuous in the following two cases: $h\cdot 0=0$ and $0\cdot h=0$, for $h\in G$. Then for arbitrary subbase neighbourhoods $W_{g_1,V_1},\ldots,W_{g_n,V_n}$ and $W_{V_1,g_1},\ldots,W_{V_n,g_n}$ we have that
\begin{equation*}
    h\cdot \left(W_{g_1,V_1}\cap\cdots\cap W_{g_n,V_n}\right)\subseteq W_{hg_1,V_1}\cap\cdots\cap W_{hg_n,V_n}
\end{equation*}
and
\begin{equation*}
    \left(W_{V_1,g_1}\cap\cdots\cap W_{V_n,g_n}\right)\cdot h\subseteq W_{V_1,g_1h}\cap\cdots\cap W_{V_n,g_nh}.
\end{equation*}
Also, since translations in the quasitopological group $(G,\tau)$ are homeomorphisms, for every open subbase neighbourhood $V\in\mathscr{B}_U$ of the neutral element of $G$ and every $g\in G$ we have that $\left(W_{g,V}\right)^{-1}\subseteq W_{V^{-1},g^{-1}}$. Therefore $(G^0,\tau_0)$ is a quasitopological inverse semigroup with continuous inversion.

Now for every open subbase neighbourhoods $V_1,V_2\in\mathscr{B}_U$ of the neutral element of $G$ such that $\operatorname{cl}_G(V_1)\subseteq V_2$ and every $g\in G$ the following conditions holds:
\begin{equation*}
\operatorname{cl}_G(W_{g,V_2})\subseteq W_{g,V_1} \qquad \mbox{and} \qquad
\operatorname{cl}_G(W_{V_2,g})\subseteq W_{V_1,g}.
\end{equation*}
Hence we get that the topological space $(G^0,\tau_0)$ is regular.
\end{proof}

\begin{theorem}\label{theorem-2.5}
A topological group $G$ is $H$-closed in the class of semitopological inverse semigroups with continuous inversion if and only if $G$ is compact.
\end{theorem}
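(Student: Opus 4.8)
The plan is to establish both implications by recognizing that, for topological groups, compactness is exactly the conjunction of precompactness and Raikov completeness, and then extracting each of these two properties separately from the assumption of $H$-closedness.

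For the \emph{sufficiency} I would argue directly. Suppose $G$ is compact and that $G$ is contained, both as a subsemigroup and as a topological subspace, in some semitopological inverse semigroup $S$ with continuous inversion. All spaces under consideration are Hausdorff, so $G$ is a compact subset of a Hausdorff space and is therefore closed in $S$. Since $S$ was arbitrary, $G$ is $H$-closed in the class.

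For the \emph{necessity} I would split the desired conclusion into precompactness and Raikov completeness. First, every topological group is a topological inverse semigroup with continuous inversion, and hence belongs to the class of semitopological inverse semigroups with continuous inversion; consequently $H$-closedness of $G$ in this larger class forces $G$ to be a closed subgroup of every topological group containing it, that is, $G$ is $H$-closed in the class of topological groups, which by Raikov's theorem (cited in the introduction via \cite{Raikov1946}) means $G$ is Raikov complete. Second, a Hausdorff topological group is a regular quasitopological group, so if $G$ were not precompact, then Proposition~\ref{proposition-2.4} would supply a regular semitopological inverse semigroup with continuous inversion containing $G$ as a \emph{non-closed} subgroup, contradicting the assumed $H$-closedness; hence $G$ is precompact. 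Finally I would invoke the standard fact that a precompact Raikov complete topological group is compact: the Raikov completion $\widehat{G}$ of a precompact group is compact, and completeness yields $G=\widehat{G}$, so $G$ itself is compact.

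The genuinely difficult content has already been isolated in Proposition~\ref{proposition-2.4}, whose explicit construction of an over-semigroup is what detects non-precompactness; within the theorem proper the only points requiring care are verifying that topological groups indeed sit inside the class (so that Raikov completeness can be harvested) and citing correctly the elementary equivalence ``precompact $+$ complete $=$ compact.'' I therefore expect no serious obstacle at this stage, the main work having been front-loaded into Proposition~\ref{proposition-2.4}.
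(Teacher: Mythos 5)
Your proposal is correct and follows essentially the same route as the paper: both extract Ra\u{\i}kov completeness from $H$-closedness in the subclass of topological groups, use Proposition~\ref{proposition-2.4} to rule out non-precompactness, and invoke the fact (Theorem~3.7.15 of \cite{ArhangelskiiTkachenko2008}) that a precompact complete group is compact. The only difference is presentational: you argue directly, while the paper phrases the same steps as a proof by contradiction.
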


\begin{proof}
The implication $(\Leftarrow)$ is trivial.

$(\Rightarrow)$ Let a topological group $G$ be $H$-closed in the class of semitopological inverse semigroups with continuous inversion. Suppose to the contrary: the space $G$ is not compact. Then $G$ is $H$-closed in the class of topological groups and hence it is Ra{\v{\i}}kov complete. If $G$ is precompact then by Theorem~3.7.15 of \cite{ArhangelskiiTkachenko2008}, $G$ is compact. Hence the topological group $G$ is not precompact. This contradicts Proposition~\ref{proposition-2.4}. The obtained contradiction implies the statement of our theorem.
\end{proof}

Theorem~\ref{theorem-2.5} implies the following two corollaries:

\begin{corollary}\label{corollary-2.6}
A topological group $G$ is absolutely $H$-closed in the class of semitopological inverse semigroups with continuous inversion if and only if $G$ is compact.
\end{corollary}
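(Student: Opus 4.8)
The plan is to deduce the corollary from Theorem~\ref{theorem-2.5} together with the definition of absolute $H$-closedness. Recall that a semigroup is absolutely $H$-closed in $\mathfrak{STSG}_0$ precisely when every continuous homomorphic image of it in the class is $H$-closed, so I would treat the two implications separately, each leaning on the tight link between compactness and $H$-closedness in the Hausdorff setting.

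For the necessity $(\Rightarrow)$ I would observe that absolute $H$-closedness is formally stronger than $H$-closedness. Indeed, the identity map $\operatorname{id}\colon G\to G$ is a continuous homomorphism and $G$ itself belongs to the class of semitopological inverse semigroups with continuous inversion, since every topological group is such an object. Hence $G$ occurs as one of its own continuous homomorphic images, so if $G$ is absolutely $H$-closed then $G$ is $H$-closed, and Theorem~\ref{theorem-2.5} forces $G$ to be compact.

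For the sufficiency $(\Leftarrow)$ assume $G$ is compact and let $h\colon G\to T$ be an arbitrary continuous homomorphism into some $T\in\mathfrak{STSG}_0$. The image $h(G)$ is the continuous image of a compact space, hence compact. Since every topological space under consideration is Hausdorff, a compact subset is closed, so $h(G)$ is closed in every member of $\mathfrak{STSG}_0$ that contains it as a subspace; this is exactly $H$-closedness (compare the trivial implication in Theorem~\ref{theorem-2.5}). It then remains only to confirm that $h(G)$ is a legitimate object of the class, i.e.\ an inverse subsemigroup of $T$ carrying the induced topology.

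The step I expect to require the most care is verifying that $h(G)$ is an inverse subsemigroup of $T$. Algebraically $h(G)$ is a group, being a homomorphic image of the group $G$; one then checks that in the inverse semigroup $T$ the group inverse of an element of the subgroup $h(G)$ coincides with its inverse-semigroup inverse, by the uniqueness of inverses in $T$, so that $h(G)$ is closed under the inversion of $T$. With this in hand, $h(G)$ inherits separate continuity of the multiplication and continuity of the inversion from $T$, whence $h(G)\in\mathfrak{STSG}_0$, and the compactness argument above shows that every continuous homomorphic image of $G$ is $H$-closed, as required.
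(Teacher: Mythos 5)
Your proposal is correct and matches the paper's (implicit) argument: the paper states Corollary~\ref{corollary-2.6} as an immediate consequence of Theorem~\ref{theorem-2.5}, and your two directions --- the identity map reducing absolute $H$-closedness to $H$-closedness for necessity, and compactness of continuous homomorphic images (together with the check that a subgroup of an inverse semigroup is an inverse subsemigroup closed under the ambient inversion, hence a member of the class) for sufficiency --- are exactly the routine verifications being left to the reader. No gaps.
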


\begin{corollary}\label{corollary-2.7}
A topological group $G$ is $H$-closed in the class of semitopological semigroups if and only if $G$ is compact.
\end{corollary}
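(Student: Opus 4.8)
The plan is to deduce this corollary from Theorem~\ref{theorem-2.5} by exploiting the inclusion between the two classes of semigroups. For the trivial direction $(\Leftarrow)$, I would record that since all spaces considered here are Hausdorff and a compact subset of a Hausdorff space is closed, a compact group $G$ is automatically a closed subsemigroup of every semitopological semigroup $T$ that contains it as a topological subspace. Hence $G$ is $H$-closed in the class of semitopological semigroups. Note that here one cannot merely quote the $(\Leftarrow)$ of Theorem~\ref{theorem-2.5}, because that asserts $H$-closedness only in the \emph{smaller} class; the direct compactness argument is needed to cover all semitopological semigroups.

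For the nontrivial direction $(\Rightarrow)$, the key observation is that the class of semitopological inverse semigroups with continuous inversion is contained in the class of semitopological semigroups, and that a topological group $G$ belongs to both classes, since a group is an inverse semigroup whose inversion coincides with the (continuous) group inverse. I would then use the monotonicity of $H$-closedness with respect to class inclusion: if $G$ is a closed subsemigroup of \emph{every} member of the larger class that contains it as a subsemigroup and a topological space, then in particular $G$ is closed in every member of the smaller class. Thus $H$-closedness of $G$ in the class of semitopological semigroups implies $H$-closedness of $G$ in the class of semitopological inverse semigroups with continuous inversion, and applying the $(\Rightarrow)$ part of Theorem~\ref{theorem-2.5} yields that $G$ is compact.

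Since the statement follows formally from Theorem~\ref{theorem-2.5} together with the elementary class-inclusion remark, I do not anticipate a genuine obstacle. The only points demanding (routine) care are checking that $G$ genuinely lies in the smaller class and, above all, getting the direction of the monotonicity right: being $H$-closed in the \emph{larger} class is the stronger property, which is exactly what makes the implication $(\Rightarrow)$ flow from the larger to the smaller class, while the reverse reduction in $(\Leftarrow)$ must instead be handled by the direct compactness argument.
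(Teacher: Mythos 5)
Your proposal is correct and is essentially the paper's own (implicit) derivation: the paper deduces Corollary~\ref{corollary-2.7} directly from Theorem~\ref{theorem-2.5}, exactly via the class-inclusion monotonicity you describe for $(\Rightarrow)$ and the trivial compactness-implies-closedness argument for $(\Leftarrow)$. Your added care about the direction of monotonicity and about $G$ lying in both classes is precisely the routine verification the paper leaves unstated.
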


The following example shows that there exists a non-compact quasitopological group with adjoined zero which $H$-closed in the class of semitopological inverse semigroups with continuous inversion.

\begin{example}\label{example-2.8}
Let $\mathbb{R}$ be the additive group of real numbers with usual topology. We put $G$ is the direct quare of $\mathbb{R}$ with the product topology. It is well known that $G$ is a topological group. Let $G^0$ be the group $G$ with the adjoined zero $0$. We define the topology $\tau$ on $G^0$ in the following way. For every non-zero element $x$ of $G^0$ the base of the topology $\tau$ at $x$ coincides with base of the product topology at $x$ in $G$. For every $(x_0,y_0)\in\mathbb{R}^2$ and every $\varepsilon>0$ we denote by
\begin{equation*}
    O_{\varepsilon}(x_0,y_0)=\left\{(x,y)\in\mathbb{R}^2\colon \sqrt{(x-x_0)^2+(y-y_0)^2}\leqslant\varepsilon\right\}
\end{equation*}
the usual closed $\varepsilon$-ball with the center at the point  $(x_0,y_0)$. We denote
\begin{equation*}
    A(x_0,y_0)=\left\{(x_0,y)\in\mathbb{R}^2\colon y\in\mathbb{R}\right\}\cup \left\{(x,y_0)\in\mathbb{R}^2\colon x\in\mathbb{R}\right\}
\end{equation*}
and
\begin{equation*}
    U_{\varepsilon}(x_0,y_0)=G^0\setminus\left(O_{\varepsilon}(x_0,y_0)\cup A(x_0,y_0)\right).
\end{equation*}
Now we put $\mathscr{P}(0)=\left\{U_{\varepsilon}(x,y)\colon (x,y)\in\mathbb{R}^2, \varepsilon>0 \right\}$ and $\mathscr{P}(0)\cup\mathscr{B}_G$ is a subbase of the topology $\tau$ on $G^0$, where $\mathscr{B}_G$ is a base of the topology of the topological group $G$. Simple verifications show that $(G^0,\tau)$ is a Hausdorff semitopological inverse semigroup with continuous inversion and $(G^0,\tau)$ is not a regular space.

Then for any finitely many points $(x_1,y_1),\ldots,(x_n,y_n) \in\mathbb{R}^2$ and finitely many $\varepsilon_1,\ldots,\varepsilon_n>0$ the following conditions hold:
\begin{itemize}
  \item[$(a)$] $O_{\varepsilon_1}(x_1,y_1)\cup\cdots\cup O_{\varepsilon_n}(x_n,y_n)$ is a compact subset of the space $(G^0,\tau)$;
  \item[$(b)$] $\operatorname{cl}_{G^0}\left(U_{\varepsilon_1}(x_1,y_1)\cap\cdots \cap U_{\varepsilon_n}(x_n,y_n)\right)\cup O_{\varepsilon_1}(x_1,y_1)\cup\cdots\cup O_{\varepsilon_n}(x_n,y_n)=G^0$.
\end{itemize}
This implies that $(G^0,\tau)$ is an $H$-closed topological space and hence the semigroup $(G^0,\tau)$ is $H$-closed in the class of semitopological inverse semigroups with continuous inversion.
\end{example}

\section{On the closure of a semilattice in a semitopological inverse semigroup with continuous inversion}\label{s3}

It is well known that the subset of idempotent $E(S)$ of a topological semigroup $S$ is a closed subset of $S$ (see: \cite[Vol.~1, Theorem~1.5]{CarruthHildebrantKoch}). We observe that for semitopological semigroups this statement does not hold \cite{Ruppert1984}. Amassing, but the subset of all idempotent $E(S)$ of a semitopological inverse semigroup $S$ with continuous inversion is a closed subset of $S$.

\begin{proposition}\label{proposition-3.1}
The subset of idempotents $E(S)$ of a semitopological inverse semigroup $S$ with continuous inversion is a closed subset of $S$.
\end{proposition}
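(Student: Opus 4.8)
The plan is to show that $\operatorname{cl}_S(E(S))\subseteq E(S)$, which gives the result since $E(S)$ is always contained in its own closure. I would first extract as much structure from the closure as possible. By Proposition~\ref{proposition-2.2} the set $T:=\operatorname{cl}_S(E(S))$ is an inverse subsemigroup of $S$, because $E(S)$ is a semilattice and hence an inverse subsemigroup. Next I would check that $T$ is commutative: given $a,b\in T$ with nets $a_i\to a$ and $b_k\to b$ in $E(S)$, the identities $a_ib_k=b_ka_i$ pass to the limit twice, first in $k$ using continuity of $\lambda_{a_i}$ and $\rho_{a_i}$, then in $i$ using continuity of $\lambda_b$ and $\rho_b$, so that $ab=ba$ by Hausdorffness. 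Finally, since every idempotent is its own inverse and the inversion is continuous, every $x\in T$ satisfies $x=x^{-1}$. Thus $T$ is a commutative inverse semigroup in which every element is self-inverse, i.e. a semilattice of groups of exponent two; it is contained in $E(S)$ exactly when it is a band, i.e. when it contains no nontrivial subgroup element.

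It remains to rule out a nontrivial subgroup element. So suppose $x\in T$ carries a net of idempotents $g_i\to x$, and set $e:=x^2=xx^{-1}\in E(S)$. From $x=x^{-1}$ one gets $x^3=xx^{-1}x=x$, so $\{x,e\}$ is either the singleton $\{e\}$ (the good case, where $x$ is idempotent) or a copy of the two-element group. Taking the limit of the identities $yg_i=g_iy$, valid for every idempotent $y$ since idempotents commute, shows that $x$ commutes with every idempotent of $S$; in particular $g_ix=xg_i$, and continuity of $\lambda_x$ gives $g_ix\to x^2=e$. Two short subnet reductions, each using uniqueness of limits in the Hausdorff space $S$, dispose of the easy cases and let me assume $g_i\ne e$ and $g_ix\ne g_i$ for all $i$: if $g_i=e$ cofinally then $e\to x$ forces $x=e$, and if $g_ix=g_i$ cofinally then $xg_i=g_i\to x$ together with $xg_i\to e$ forces $x=e$.

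The heart of the proof is to derive a contradiction in the remaining case $x\ne e$. My plan is to separate $x$ from $e$ by disjoint open sets $U\ni x$ and $V\ni e$, pass to the tail of the net on which simultaneously $g_i\in U$ and $g_ix\in V$ (possible since $g_i\to x$ and $g_ix\to e$), and then exploit separate continuity of the left translations $\lambda_{g_i}$: continuity at the point $x$, where $\lambda_{g_i}(x)=g_ix\in V$, furnishes a neighbourhood $N_i\ni x$ with $g_iN_i\subseteq V$, and since $g_i\cdot g_i=g_i\in U$, landing any single $g_i$ inside its own $N_i$ would place $g_i$ in the empty set $U\cap V$. The main obstacle is precisely that $N_i$ depends on $i$: this reflects the failure of joint continuity of the multiplication (squaring is genuinely discontinuous, as the map $y\mapsto yy^{-1}$ on $\mathscr{A}(\mathbb{Z})$ in Example~\ref{example-2.3} shows), so that the value $x$ of the double net $(g_ig_j)$ along the diagonal is not forced to agree with its iterated-limit value $e$. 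I expect to close this gap by a careful cofinal selection forcing some $g_i\in N_i$, thereby contradicting the Hausdorff property. It is worth noting that the continuity of inversion is entirely consumed in the reductions of the first paragraph (on $T$ the inversion is the identity map), so that the decisive final step is in truth an assertion about commutative semitopological semigroups.
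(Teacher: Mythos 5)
Everything in your proposal up to the last step is correct: $T=\operatorname{cl}_S(E(S))$ is indeed a commutative inverse subsemigroup consisting of self-inverse elements, and your reductions correctly isolate the only possible obstruction, namely a net of idempotents $g_i$ converging to an element $x$ of order two in the maximal subgroup $H_e$ with identity $e=x^2$. But the step you defer --- ``a careful cofinal selection forcing some $g_i\in N_i$'' --- is a genuine gap, and it cannot be filled: separate continuity produces the neighbourhood $N_i$ of $x$ only \emph{after} $g_i$ is fixed, and nothing ties $N_i$ to the position of $g_i$; in any structure where your final configuration actually occurs, every admissible $N_i$ must automatically exclude $g_i$ (otherwise $g_i=g_ig_i\in g_iN_i\subseteq V$ would contradict $g_i\in U$). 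And such structures exist, so the residual statement about commutative semitopological semigroups is false. Let $F$ be the semilattice of all finite subsets of $\mathbb{N}$ under union, let $\mathbb{Z}_2=\{1,a\}$, and let $S=\mathbb{Z}_2\times F$ with coordinatewise multiplication; this is a commutative inverse semigroup with $E(S)=\{1\}\times F$, in which inversion is the identity map and hence continuous for \emph{every} topology. Declare the basic neighbourhoods of a point $(g,A)$ to be the sets
\begin{equation*}
U_{\nu}(g,A)=\left\{\bigl(ga^{|B|},A\cup B\bigr)\colon B=\{b_1<\cdots<b_m\}\subseteq\mathbb{N}\setminus A,\ b_1\geqslant\nu(\varnothing),\ b_{j+1}\geqslant\nu\bigl(\{b_1,\ldots,b_j\}\bigr)\ \text{for all}\ j<m\right\},
\end{equation*}
where $\nu$ ranges over all functions from the finite subsets of $\mathbb{N}\setminus A$ to $\mathbb{N}$. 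One checks that these sets form a base of a Hausdorff topology: if $A\neq C$, neighbourhoods with thresholds exceeding $\max(A\cup C)$ are disjoint, since a point recovers $A$ as the part of its set-coordinate below the threshold; if $A=C$ and $g\neq h$, disjointness is automatic because equal set-coordinates force equal $B$'s and then the $\mathbb{Z}_2$-coordinates $ga^{|B|}\neq ha^{|B|}$ differ. Multiplication is separately continuous: to multiply by $(h,C)$, map $U_{\mu}(g,A)$ into $U_{\nu}(hg,C\cup A)$ by choosing $\mu$ pointwise larger than $\nu$ and larger than $\max C$, which forces the added sets $B$ to avoid $C$ and keeps the $\mathbb{Z}_2$-parity bookkeeping exact. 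Finally $(1,\{n\})\to(a,\varnothing)$ as $n\to\infty$, because $(1,\{n\})=\bigl(a\cdot a^{|\{n\}|},\varnothing\cup\{n\}\bigr)\in U_{\nu}(a,\varnothing)$ whenever $n\geqslant\nu(\varnothing)$. So a sequence of idempotents converges to the order-two element $(a,\varnothing)$, and $E(S)$ is not closed.

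You should also know that the paper's own proof, which takes a different route, founders on exactly the point you could not settle. It notes that $\operatorname{\textsf{Fix}}(\operatorname{\textbf{inv}})$ is closed (true) and that $E(S)\subseteq\operatorname{\textsf{Fix}}(\operatorname{\textbf{inv}})$ (true), and then asserts the reverse inclusion on the grounds that $xx=xx^{-1}\in E(S)$; but that only says $x^2$ is an idempotent, i.e.\ that $x$ is an element of order at most two of the subgroup $H_{x^2}$ --- it does not give $x\in E(S)$. The set $\operatorname{\textsf{Fix}}(\operatorname{\textbf{inv}})$ is the set of self-inverse elements, not the set of idempotents, and the two coincide only under extra hypotheses (for instance, when no maximal subgroup of $S$ contains an element of order two, in which case both the paper's argument and yours are complete). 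So your attempt, though it has a gap, has the merit of making explicit the very case the published one-line proof silently assumes away; and the example above shows that this case is fatal: the proposition in this generality is false, not merely unproven.
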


\begin{proof}
First we observe that for any topological space $X$ and any continuous map $f\colon X\to X$ the set $\operatorname{\textsf{Fix}}(f)$ of fixed point of $f$ is closed subset of $X$ (see: \cite[Vol.~1, Theorem~1.4]{CarruthHildebrantKoch} or \cite[Theorem~1.5.4]{Engelking1989}). Since $e^{-1}=e$ for every idempotent $e\in S$, the continuity of inversion implies that $E(S)\subseteq\operatorname{\textsf{Fix}}(\operatorname{\textbf{inv}})$. Let be $x\in S$ such that $x\in\operatorname{\textsf{Fix}}(\operatorname{\textbf{inv}})$. Since $S$ is an inverse semigroup we obtain that $xx=xx^{-1}\in E(S)$ and hence $\operatorname{\textsf{Fix}}(\operatorname{\textbf{inv}})\subseteq E(S)$. This completes the proof of the proposition.
\end{proof}

Proposition~\ref{proposition-3.1} implies the following

\begin{corollary}\label{corollary-3.2}
The closure of a subsemilattice in a semitopological inverse semigroup $S$ with continuous inversion is a subsemilattice of $S$.
\end{corollary}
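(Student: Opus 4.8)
The plan is to combine the general fact that the closure of a subsemigroup is again a subsemigroup with the closedness of the idempotent set established in Proposition~\ref{proposition-3.1}, and then to exploit the defining algebraic feature of inverse semigroups that their idempotents commute. Let $T$ be a subsemilattice of $S$ and write $F=\operatorname{cl}_S(T)$ for its closure.

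First I would recall that, by Proposition~1.8$(ii)$ from \cite[Chapter~I]{Ruppert1984} (already invoked in the proof of Proposition~\ref{proposition-2.2}), the closure of a subsemigroup of a semitopological semigroup is again a subsemigroup; hence $F$ is a subsemigroup of $S$. Next, since a semilattice is a commutative semigroup of idempotents, we have $T\subseteq E(S)$. Proposition~\ref{proposition-3.1} tells us that $E(S)$ is closed in $S$, so $F=\operatorname{cl}_S(T)\subseteq\operatorname{cl}_S(E(S))=E(S)$, and therefore every element of $F$ is an idempotent.

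It remains to verify commutativity, but this comes for free from the ambient structure: in any inverse semigroup the idempotents commute, so any two elements of $F\subseteq E(S)$ commute. Thus $F$ is a subsemigroup of $S$ all of whose elements are mutually commuting idempotents, that is, a subsemilattice of $S$, as required.

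Because Proposition~\ref{proposition-3.1} already carries the analytic weight (the closedness of $E(S)$ rests essentially on the continuity of inversion together with the identification $\operatorname{\textsf{Fix}}(\operatorname{\textbf{inv}})=E(S)$ of a closed fixed-point set), no genuine obstacle remains at the level of the corollary. The only point worth stressing is that no separate-continuity argument is needed to force commutativity of the limit elements: commutativity of idempotents is purely algebraic in an inverse semigroup and is inherited automatically once the closure is known to land inside $E(S)$.
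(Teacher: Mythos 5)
Your proof is correct and is essentially the argument the paper intends when it states that Corollary~\ref{corollary-3.2} follows from Proposition~\ref{proposition-3.1}: the closure of a subsemigroup in a semitopological semigroup is a subsemigroup (Ruppert's Proposition~I.1.8$(ii)$), it lands inside the closed set $E(S)$, and commutativity is automatic since idempotents of an inverse semigroup commute. No gaps; this matches the paper's route.
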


Since the closure of a subsemilattice in a Hausdorff topological semigroup is again a topological semilattice, an (absolutely) $H$-closed topological semilattice in the class of topological semilattices is (absolutely) $H$-closed in the class of topological semigroups \cite{GutikPavlyk2003}. In \cite{Stepp1975} Stepp proved that an algebraic semilattice $E$ is algebraically $h$-complete in the class of topological semilattices if and only if every chain in $E$ is finite. The following example shows that for every infinite cardinal $\lambda$ there exists an algebraically $h$-complete semilattice $E(\lambda)$ in the class of topological semilattices of cardinality $\lambda$ such that $E(\lambda)$ with the discrete topology is not $H$-closed in the class of semitopological semigroups.

\begin{example}\label{example-3.3}
Let $\lambda$ be any infinite cardinal. We fix an arbitrary $a_0\in\lambda$ and define the semigroup operation on $\lambda$ by the formula:
\begin{equation*}
    xy=
\left\{
  \begin{array}{ll}
    x, & \hbox{if~} x=y;\\
    a_0, & \hbox{if~} x\neq y.
  \end{array}
\right.
\end{equation*}
The cardinal $\lambda$ with so defined semigroup operation we denote by $E(\lambda)$. It is obvious that $E(\lambda)$ is a semilattice such that $a_0$ is zero of $E(\lambda)$ and any two distinct non-zero elements of $E(\lambda)$ are incomparable with respect to the natural partial order on $E(\lambda)$. Let be $a\notin E(\lambda)$. We extend the semigroup operation from $E(\lambda)$ onto $S=E(\lambda)\cup\{a\}$ in the following way:
\begin{equation*}
    aa=ax=xa=a_0, \qquad \mbox{for any~} \; x\in E(\lambda).
\end{equation*}
It is obvious that $S$ with so defined operation is not a semilattice.

We define a topology $\tau$ on $S$ in the following way. Fix an arbitrary sequence of distinct points $\{x_n\colon n\in\mathbb{N}\}$ from $E(\lambda)$ and put $U_n(a)=\{a\}\cup\{x_i\colon i\geqslant n\}$. Put all elements of the set $E(\lambda)$ are isolated points of the space $(S,\tau)$ and the family $\mathscr{B}(a)=\{U_n(a)\colon n\in\mathbb{N}\}$ is a base of the topology $\tau$ at the point $a\in S$. Simple verifications show that $(S,\tau)$ is a metrizable $0$-dimensional semitopological semigroup and $E(\lambda)$ is a dense subsemilattice of $(S,\tau)$. Also, we observe that by Theorem~9 from \cite{Stepp1975} the semilattice $E(\lambda)$ is algebraically $h$-complete in the class of topological semilattices.
\end{example}

\begin{remark}
We observe that for every infinite cardinal $\lambda$ and every Hausdorff topology $\tau$ on $E(\lambda)$ such that $(E(\lambda),\tau)$ is a semitopological semilattice we have that all non-zero idempotents of $(E(\lambda),\tau)$ are isolated points and moreover $(E(\lambda),\tau)$ is a topological semilattice. Also, a simple modification of the proof in the Example~\ref{example-3.3} shows that a semitopological semilattice $(E(\lambda),\tau)$ is $H$-closed in the class of semitopological semigroups if and only if the space $(E(\lambda),\tau)$ is compact.
\end{remark}

Suppose that $E$ is a Hausdorff semitopological semilattice. If $L$ is a maximal chain in $E$, then by Proposition~IV-1.13 of \cite{GierzHofmannKeimelLawsonMisloveScott2003} we have that $L=\bigcap_{e\in L}({\uparrow}e\cup{\downarrow}e)$ is a closed subset of $E$ and hence we proved the following proposition:

\begin{proposition}\label{proposition-3.4}
The closure of a linearly ordered subsemilattice of a Hausdorff semitopological semilattice $E$ is a linearly ordered subsemilattice of $E$.
\end{proposition}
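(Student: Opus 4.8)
The plan is to reduce the statement to the fact recorded in the paragraph immediately above, namely that maximal chains in a Hausdorff semitopological semilattice are closed. Write $T$ for the given linearly ordered subsemilattice and put $F=\operatorname{cl}_E(T)$; I want to show that $F$ is a linearly ordered subsemilattice.

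First I would dispose of the algebraic half. By \cite[Chapter~I, Proposition~1.8$(ii)$]{Ruppert1984} (the same tool already used in the proof of Proposition~\ref{proposition-2.2}) the closure of a subsemigroup of a semitopological semigroup is again a subsemigroup, so $F$ is a subsemigroup of $E$. Since $E$ is a semilattice, idempotency and commutativity are inherited by every subset closed under the operation, whence $F$ is automatically a subsemilattice. This leaves only the claim that $F$ is \emph{linearly ordered}, which is the real content.

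For the order part, since $T$ is a chain with respect to the natural order, the Axiom of Choice lets me enlarge it to a maximal chain $L$ in $E$ with $T\subseteq L$. By the observation preceding the proposition (\cite[Proposition~IV-1.13]{GierzHofmannKeimelLawsonMisloveScott2003}), $L=\bigcap_{e\in L}({\uparrow}e\cup{\downarrow}e)$ is closed in $E$: for each fixed $e$ the set ${\downarrow}e=\{x\in E\colon xe=x\}$ is the fixed-point set of the separately continuous right translation $x\mapsto xe$ and is therefore closed in the Hausdorff space $E$, while ${\uparrow}e=\{x\in E\colon xe=e\}$ is the preimage of the closed singleton $\{e\}$ under the same map and is likewise closed; their union is closed, and an arbitrary intersection of closed sets is closed. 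The identity itself holds because membership of $x$ in every ${\uparrow}e\cup{\downarrow}e$ means $x$ is comparable to each element of $L$, which by maximality forces $x\in L$. From $T\subseteq L$ and the closedness of $L$ I then obtain $F=\operatorname{cl}_E(T)\subseteq\operatorname{cl}_E(L)=L$, and since $L$ is a chain every subset of $L$ is linearly ordered, so $F$ is linearly ordered.

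I expect the only genuine obstacle to be the closedness of the maximal chain $L$, that is, verifying that ${\uparrow}e$ and ${\downarrow}e$ are closed from \emph{separate} continuity alone together with the intersection formula; with the cited Proposition~IV-1.13 at hand this is routine, and everything else (extension to a maximal chain, the subsemigroup property of the closure, and passing to a subset of a chain) is immediate.
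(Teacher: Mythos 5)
Your proof is correct and takes essentially the same route as the paper: the paper also deduces the proposition from the fact that a maximal chain $L=\bigcap_{e\in L}({\uparrow}e\cup{\downarrow}e)$ is closed in a Hausdorff semitopological semilattice (via Proposition~IV-1.13 of \cite{GierzHofmannKeimelLawsonMisloveScott2003}), implicitly extending the given chain to a maximal one. Your extra details --- the closedness of ${\uparrow}e$ and ${\downarrow}e$ from separate continuity, the maximality argument for the intersection formula, and the subsemilattice property of the closure --- simply make explicit what the paper leaves to the reader.
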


It is well known that the natural partial order on a Hausdorff semitopological semilattice is semiclosed (see \cite[Proposition~IV-1.13]{GierzHofmannKeimelLawsonMisloveScott2003}). Also, by Lemma~3 of \cite{Ward1954} a semiclosed linear order is closed, and hence every linearly ordered set with a closed order admits the structure of a Hausdorff topological semilattice. This implies the following proposition:

\begin{proposition}\label{proposition-3.5}
Every linearly ordered Hausdorff semitopological semilattice is a topological semilattice.
\end{proposition}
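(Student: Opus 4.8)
The plan is to read the result off the chain of facts assembled just above the statement, the only genuine content being the passage from a closed linear order to joint continuity of the meet. Let $E$ be a linearly ordered Hausdorff semitopological semilattice with natural order $\leqslant$. By Proposition~IV-1.13 of \cite{GierzHofmannKeimelLawsonMisloveScott2003} the order $\leqslant$ is semiclosed, i.e. ${\uparrow}a$ and ${\downarrow}a$ are closed for every $a\in E$. Since $E$ is linearly ordered, Lemma~3 of \cite{Ward1954} upgrades this to a \emph{closed} order: the set $\{(x,y)\in E\times E\colon x\leqslant y\}$ is closed in $E\times E$. It therefore remains to verify that on a linearly ordered space with closed order the semilattice operation $(x,y)\mapsto xy$, which here is the meet (so that $xy\in\{x,y\}$, with $xy=x$ iff $x\leqslant y$), is jointly continuous.

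First I would treat the off-diagonal points. Swapping coordinates is a homeomorphism of $E\times E$, so closedness of $\{(x,y)\colon x\leqslant y\}$ gives closedness of $\{(x,y)\colon y\leqslant x\}$; by linearity its complement
\[
\{(x,y)\in E\times E\colon x<y\}
\]
is open, and symmetrically $\{(x,y)\colon y<x\}$ is open. On the first set the product equals the first projection, $xy=x$, and on the second it equals the second projection; hence the product is continuous at every point $(a,b)$ with $a\neq b$.

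Next I would treat the diagonal. Here the usual worry is that one must invoke separate continuity, but in fact no such input is needed: given a point $(a,a)$ and an open $W\ni a=aa$, the product of the neighbourhood $W\times W$ lands in $W$, since for $x,y\in W$ we have $xy\in\{x,y\}\subseteq W$. Thus $W\times W$ witnesses continuity at $(a,a)$. The three families of open sets cover $E\times E$, so the meet is jointly continuous and $E$ is a topological semilattice.

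The main obstacle is not where one would expect it. The diagonal, which is the delicate case for a general semitopological semilattice, becomes trivial for a linearly ordered one precisely because the meet of two elements is one of the two; all the weight is thrown onto the off-diagonal estimate, where the essential hypothesis is that the order be \emph{closed}, so that the strict rays are open. That closedness is exactly what Ward's lemma extracts from semiclosedness in the linearly ordered case, and semiclosedness is in turn the only place where separate continuity of the operation enters, through the cited result of \cite{GierzHofmannKeimelLawsonMisloveScott2003}.
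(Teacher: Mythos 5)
Your proof is correct and follows essentially the same route as the paper: semiclosedness of the natural order via Proposition~IV-1.13 of \cite{GierzHofmannKeimelLawsonMisloveScott2003}, Ward's Lemma~3 \cite{Ward1954} to upgrade the semiclosed linear order to a closed one, and then joint continuity of the meet. The only difference is that you explicitly verify the final step (openness of the strict rays off the diagonal, where the meet is a projection, and the inclusion $W\cdot W\subseteq W$ on the diagonal), which the paper asserts without proof.
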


Propositions~\ref{proposition-3.4} and~\ref{proposition-3.5} imply

\begin{theorem}\label{theorem-3.6}
A Hausdorff linearly ordered topological semilattice $E$ is $H$-closed in the class of semitopological semilattices if and only if $E$ is $H$-closed in the class of topological semilattices.
\end{theorem}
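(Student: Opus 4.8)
The plan is to combine the obvious inclusion of classes with Propositions~\ref{proposition-3.4} and~\ref{proposition-3.5}; the crux is that these two propositions force the closure of $E$ in \emph{any} ambient semitopological semilattice to be a genuine \emph{topological} semilattice, which collapses the semitopological case onto the topological one.

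First I would handle the direction $(\Rightarrow)$, which is immediate. Every topological semilattice is in particular a semitopological semilattice, so the class of topological semilattices is contained in the class of semitopological semilattices. Hence if $E$ is closed in every semitopological semilattice containing it, then a fortiori it is closed in every topological semilattice containing it, and so $H$-closedness in the larger class yields $H$-closedness in the smaller class.

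For the converse $(\Leftarrow)$, I would assume that $E$ is $H$-closed in the class of topological semilattices and take an arbitrary Hausdorff semitopological semilattice $T$ containing $E$ both as a subsemilattice and as a topological subspace. Since $E$ is linearly ordered, Proposition~\ref{proposition-3.4} guarantees that $\operatorname{cl}_T(E)$ is again a linearly ordered subsemilattice of $T$; being a subspace of the Hausdorff semitopological semilattice $T$, it is itself a Hausdorff linearly ordered semitopological semilattice, so Proposition~\ref{proposition-3.5} promotes it to a topological semilattice. Now $E$ is a dense subsemilattice of the topological semilattice $\operatorname{cl}_T(E)$, and applying the hypothesis (that $E$ is $H$-closed in the class of topological semilattices) to this ambient topological semilattice forces $E$ to be closed in $\operatorname{cl}_T(E)$; density then gives $E=\operatorname{cl}_T(E)$, so $E$ is closed in $T$. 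As $T$ was arbitrary, $E$ is $H$-closed in the class of semitopological semilattices.

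I do not expect a genuine obstacle here once the two propositions are granted: the argument is essentially a reduction. The only point deserving care is the routine verification that $\operatorname{cl}_T(E)$ inherits Hausdorffness, linear order, and separate continuity of the operation as a subspace of $T$, so that Proposition~\ref{proposition-3.5} is legitimately applicable to it.
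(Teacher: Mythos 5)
Your proposal is correct and is exactly the paper's argument: the paper derives Theorem~\ref{theorem-3.6} directly from Propositions~\ref{proposition-3.4} and~\ref{proposition-3.5}, and your write-up simply makes explicit the intended reduction (the closure of $E$ in any ambient semitopological semilattice is a linearly ordered Hausdorff semitopological, hence topological, semilattice, to which the topological-class $H$-closedness applies). The trivial direction and the density argument are handled exactly as intended, so there is nothing to add.
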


Theorem~\ref{theorem-3.6} and results obtained in the paper \cite{GutikRepovs2008} imply Corollaries~\ref{corollary-3.7}---\ref{corollary-3.11}.

A linearly ordered semilattice $E$ is called \emph{complete} if every non-empty subset of $S$ has $\inf$ and $\sup$.

\begin{corollary}\label{corollary-3.7}
A linearly ordered semitopological semilattice $E$ is $H$-closed in the class of semitopological semilattices if and only if the following conditions hold:
\begin{itemize}
    \item[$(i)$] $E$ is complete;
    \item[$(ii)$] $x=\sup A$ for $A={\downarrow}A\setminus\{ x\}$
    implies $x\in\operatorname{cl}_EA$, whenever
    $A\neq\varnothing$; and
    \item[$(iii)$] $x=\inf B$ for $B={\uparrow}B\setminus\{ x\}$
    implies $x\in\operatorname{cl}_EB$, whenever
    $B\neq\varnothing$
\end{itemize}
\end{corollary}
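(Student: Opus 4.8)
The plan is to reduce the statement in the class of semitopological semilattices to the already-known characterization in the class of \emph{topological} semilattices, and then invoke the results of \cite{GutikRepovs2008} verbatim. The bridge is Theorem~\ref{theorem-3.6}, which tells us that for a Hausdorff linearly ordered topological semilattice, being $H$-closed in the class of semitopological semilattices is equivalent to being $H$-closed in the class of topological semilattices. So the first thing I would check is that the ambient object $E$ in the corollary really falls under the hypotheses of Theorem~\ref{theorem-3.6}: the corollary assumes $E$ is a linearly ordered semitopological semilattice, and by Proposition~\ref{proposition-3.5} every linearly ordered Hausdorff semitopological semilattice is automatically a topological semilattice. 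Thus $E$ is in fact a Hausdorff linearly ordered topological semilattice, and the word ``semitopological'' in the statement of the corollary is, for linearly ordered $E$, no stronger than ``topological''.

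Having established that $E$ is a linearly ordered topological semilattice, I would apply Theorem~\ref{theorem-3.6} to replace the phrase ``$H$-closed in the class of semitopological semilattices'' by ``$H$-closed in the class of topological semilattices''. At this point the statement to be proved is exactly the criterion of $H$-closedness of a linearly ordered topological semilattice in the class of topological semilattices. This is precisely the content of the main theorem of \cite{GutikRepovs2008}, which asserts that a linearly ordered topological semilattice $E$ is $H$-closed in the class of topological semilattices if and only if conditions $(i)$, $(ii)$, $(iii)$ hold, namely completeness together with the two ``$\sup$'' and ``$\inf$'' closure conditions on down-sets and up-sets. I would therefore simply cite that result to close the argument in both directions.

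The only real point requiring care — and the place I would expect a referee to look — is confirming that the conditions $(i)$--$(iii)$ as stated here match, word for word, the conditions used in \cite{GutikRepovs2008}, since the corollary is introduced precisely as a consequence of ``Theorem~\ref{theorem-3.6} and results obtained in the paper \cite{GutikRepovs2008}''. No genuinely new argument is needed: the mathematical work has all been done, in Proposition~\ref{proposition-3.5} (to upgrade semitopological to topological), in Theorem~\ref{theorem-3.6} (to move between the two classes), and in the cited paper (to supply the explicit criterion). The proof is thus a three-line chain of implications with no hidden obstacle beyond faithful transcription of the external criterion.
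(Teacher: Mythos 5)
Your proposal is correct and is exactly the argument the paper intends: the paper offers no detailed proof, simply stating that Theorem~\ref{theorem-3.6} together with the results of \cite{GutikRepovs2008} imply Corollaries~\ref{corollary-3.7}---\ref{corollary-3.11}, and your chain (Proposition~\ref{proposition-3.5} to upgrade $E$ from semitopological to topological, Theorem~\ref{theorem-3.6} to pass between the two classes, and the $H$-closedness criterion of \cite{GutikRepovs2008} to supply conditions $(i)$--$(iii)$) is precisely that intended derivation, with the useful extra care of making the role of Proposition~\ref{proposition-3.5} explicit.
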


\begin{corollary}\label{corollary-3.8}
Every linearly ordered $H$-closed semitopological semilattice in the class of semitopological semilattices is absolutely $H$-closed in the class of semitopological semilattices.
\end{corollary}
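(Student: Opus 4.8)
The plan is to deduce Corollary~\ref{corollary-3.8} from the corresponding result in the class of topological semilattices together with Theorem~\ref{theorem-3.6}. The key external input is the theorem of Gutik and Repov\v{s} from \cite{GutikRepovs2008}, which asserts that every linearly ordered $H$-closed topological semilattice in the class of topological semilattices is absolutely $H$-closed in the class of topological semilattices. The whole point of the preceding two propositions is that in the linearly ordered setting the two classes---semitopological semilattices and topological semilattices---coincide, so that $H$-closedness can be transported between them.

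First I would recall the definition of absolute $H$-closedness: the linearly ordered semitopological semilattice $E$ is absolutely $H$-closed in the class of semitopological semilattices if every continuous homomorphic image of $E$ in a semitopological semilattice $T$ is $H$-closed in that class. So I would fix an arbitrary continuous homomorphism $h\colon E\to T$ into a semitopological semilattice $T$ and set $F=(E)h$ with the subspace topology inherited from $T$. The image $F$ is a homomorphic image of a chain, hence is itself a linearly ordered semilattice; by Proposition~\ref{proposition-3.5} the Hausdorff semitopological semilattice $F$ is in fact a \emph{topological} semilattice, and likewise $E$ is a topological semilattice. Thus $h\colon E\to F$ is a continuous homomorphism of topological semilattices, and the task reduces to showing $F$ is $H$-closed.

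Next I would invoke Theorem~\ref{theorem-3.6}: since $E$ is $H$-closed in the class of semitopological semilattices and is linearly ordered, it is $H$-closed in the class of topological semilattices. Applying the Gutik--Repov\v{s} result \cite{GutikRepovs2008} to the topological semilattice $E$, I conclude that $E$ is absolutely $H$-closed in the class of topological semilattices, so the image $F=(E)h$ is $H$-closed in the class of topological semilattices. Finally, because $F$ is linearly ordered, the reverse direction of Theorem~\ref{theorem-3.6} upgrades this to $H$-closedness of $F$ in the class of \emph{semitopological} semilattices. As $h$ was arbitrary, $E$ is absolutely $H$-closed in the class of semitopological semilattices.

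I expect the one delicate point to be the verification that the continuous homomorphic image $F$ is again \emph{linearly ordered}, so that Proposition~\ref{proposition-3.5} and Theorem~\ref{theorem-3.6} apply to it. A semilattice homomorphism preserves the natural order, and comparability is preserved by a surjective homomorphism of semilattices, so the image of a chain is a chain; this is essentially routine but should be stated explicitly. A secondary subtlety is that Theorem~\ref{theorem-3.6} as stated concerns a single fixed semilattice, and I must confirm it applies verbatim to each image $F$ rather than only to $E$---but since $F$ is itself a Hausdorff linearly ordered topological semilattice, the theorem applies to $F$ without modification. No genuinely new construction is needed; the corollary is a formal consequence of Theorem~\ref{theorem-3.6} and the cited topological-semilattice result.
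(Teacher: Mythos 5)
Your proof is correct and is essentially the paper's own argument: the paper derives Corollaries~\ref{corollary-3.7}--\ref{corollary-3.11} in one line from Theorem~\ref{theorem-3.6} combined with the results of Gutik and Repov\v{s} in \cite{GutikRepovs2008} (in particular their theorem that a linearly ordered $H$-closed topological semilattice is absolutely $H$-closed in the class of topological semilattices), which is exactly your route. You simply make explicit the details the paper leaves implicit, namely that the continuous homomorphic image $F=(E)h$ of a chain is again a chain, and that Proposition~\ref{proposition-3.5} and Theorem~\ref{theorem-3.6} therefore apply to $F$ as well as to $E$.
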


\begin{corollary}\label{corollary-3.9}
Every linearly ordered $H$-closed semitopological semilattice in the class of semitopological semilattices contains maximal and minimal idempotents.
\end{corollary}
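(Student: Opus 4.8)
The plan is to read off the conclusion from the completeness criterion already recorded in Corollary~\ref{corollary-3.7}. First I would note that, being a linearly ordered Hausdorff semitopological semilattice, $E$ is in fact a topological semilattice by Proposition~\ref{proposition-3.5}, so that the hypothesis ``$H$-closed in the class of semitopological semilattices'' is precisely the hypothesis to which Corollary~\ref{corollary-3.7} applies (equivalently, by Theorem~\ref{theorem-3.6}, $E$ is $H$-closed in the class of topological semilattices). Consequently condition $(i)$ of Corollary~\ref{corollary-3.7} holds: $E$ is complete, i.e. every non-empty subset of $E$ possesses both an infimum and a supremum in $E$.

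The second step is to specialise completeness to the whole semilattice. Since a semilattice is non-empty, the set $E$ itself is a non-empty subset of $E$, and hence $m:=\inf E$ and $M:=\sup E$ exist and belong to $E$. Because $E$ is linearly ordered, $m$ is the least and $M$ the greatest element of $E$; indeed $m\leqslant x\leqslant M$ for every $x\in E$.

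Finally I would translate this into the language of idempotents. In a semilattice every element is idempotent, so $E(E)=E$, and by the definitions recalled in the preliminaries an idempotent is minimal (respectively maximal) exactly when it is a minimal (respectively maximal) element of the natural order. As $m$ and $M$ are the least and greatest elements of the linearly ordered set $E$, they are the required minimal and maximal idempotents. I do not expect a genuine obstacle here: the whole argument rests on the single observation that completeness forces $\inf E$ and $\sup E$ to lie in $E$, together with the remark that in a linear order minimal and maximal idempotents coincide with the least and greatest elements, which renders the passage from Corollary~\ref{corollary-3.7}$(i)$ immediate.
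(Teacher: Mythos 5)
Your proof is correct. The paper gives no explicit argument for Corollary~\ref{corollary-3.9}: it presents Corollaries~\ref{corollary-3.7}---\ref{corollary-3.11} jointly as consequences of Theorem~\ref{theorem-3.6} and the results of \cite{GutikRepovs2008}, so its implicit route is to transfer the hypothesis to $H$-closedness in the class of topological semilattices (via Proposition~\ref{proposition-3.5} and Theorem~\ref{theorem-3.6}) and then quote the corresponding statement about maximal and minimal idempotents proved in \cite{GutikRepovs2008}. What you do differently is to stay inside the present paper: you take only condition $(i)$ of Corollary~\ref{corollary-3.7} (completeness) and finish with the elementary order-theoretic observation that $\inf E$ and $\sup E$, which exist and lie in $E$ by completeness, are the least and greatest elements of the linearly ordered set $E$, hence minimal and maximal idempotents, since every element of a semilattice is idempotent. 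This yields a self-contained deduction that makes the logical dependence $\text{Corollary~\ref{corollary-3.7}} \Rightarrow \text{Corollary~\ref{corollary-3.9}}$ explicit instead of citing the external source a second time, at essentially no cost; the only superfluous step is your preliminary appeal to Proposition~\ref{proposition-3.5} and Theorem~\ref{theorem-3.6}, which is not needed because Corollary~\ref{corollary-3.7} is already formulated for semitopological semilattices and applies verbatim to the hypothesis of Corollary~\ref{corollary-3.9}.
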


\begin{corollary}\label{corollary-3.10}
Let $E$ be a linearly ordered $H$-closed semitopological semilattice in the class of semitopological semilattices and $e\in E$. Then ${\uparrow}e$ and ${\downarrow}e$ are (absolutely) $H$-closed topological semilattices in the class of semitopological semilattices.
\end{corollary}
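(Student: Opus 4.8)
The plan is to exhibit both ${\uparrow}e$ and ${\downarrow}e$ as continuous homomorphic images of $E$ and then to read off the conclusion from Corollary~\ref{corollary-3.8}. First I would record the preliminary reductions. Being a subchain of the chain $E$, each of ${\uparrow}e$ and ${\downarrow}e$ is a linearly ordered subsemilattice of $E$, and with the subspace topology it is a Hausdorff linearly ordered semitopological semilattice, hence a topological semilattice by Proposition~\ref{proposition-3.5}. Next, since $E$ is linearly ordered and $H$-closed in the class of semitopological semilattices, Corollary~\ref{corollary-3.8} upgrades this to: $E$ is \emph{absolutely} $H$-closed in that class. By the definition of absolute $H$-closedness, it therefore suffices to realize ${\uparrow}e$ and ${\downarrow}e$ as images of $E$ under continuous semilattice homomorphisms landing in the member $E$ of the class; each such image is then automatically $H$-closed.

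For ${\downarrow}e$ the map is the right translation $\rho_e\colon E\to E$, $\rho_e(x)=xe$. Writing the operation as the meet, $xe=x\wedge e=\min\{x,e\}$, one checks $\rho_e$ is a homomorphism directly: $\rho_e(x)\rho_e(y)=(xe)(ye)=xy\,ee=xye=\rho_e(xy)$ by commutativity and idempotency of $e$. Separate continuity of the semitopological semilattice $E$ makes $\rho_e$ continuous, and its image is exactly $\{x\wedge e\colon x\in E\}={\downarrow}e$. Thus ${\downarrow}e$ is a continuous homomorphic image of the absolutely $H$-closed $E$, hence $H$-closed.

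For ${\uparrow}e$ the candidate is $\varphi_e\colon E\to E$, $\varphi_e(x)=x\vee e=\max\{x,e\}$, and there are two points to verify. That $\varphi_e$ is a homomorphism amounts to the identity $(x\wedge y)\vee e=(x\vee e)\wedge(y\vee e)$, which is the distributive law valid in any chain. The continuity of $\varphi_e$ is the one genuinely non-formal step, and here is where I expect the main obstacle: unlike the case of ${\downarrow}e$, the join is \emph{not} realized by a translation, since the semigroup operation is the meet. I would resolve this by a gluing argument. The sets ${\downarrow}e=\{x\colon xe=x\}$ (the coincidence set of the continuous maps $\rho_e$ and $\mathrm{id}$) and ${\uparrow}e=\{x\colon xe=e\}=\rho_e^{-1}(\{e\})$ are both closed in the Hausdorff space $E$; by linearity they cover $E$ and meet only in $\{e\}$. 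Since $\varphi_e$ restricts to the identity on the closed set ${\uparrow}e$ and to the constant $e$ on the closed set ${\downarrow}e$, and these restrictions agree at $e$, the gluing lemma yields continuity of $\varphi_e$ on $E$. As $\varphi_e(E)={\uparrow}e$, the set ${\uparrow}e$ is a continuous homomorphic image of $E$ and hence $H$-closed.

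Finally, having shown that ${\uparrow}e$ and ${\downarrow}e$ are $H$-closed and observing that each is linearly ordered, a second application of Corollary~\ref{corollary-3.8} shows that each is in fact \emph{absolutely} $H$-closed in the class of semitopological semilattices, which gives the parenthetical strengthening and completes the argument.
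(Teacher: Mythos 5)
Your proof is correct, but it takes a genuinely different route from the paper's. The paper disposes of Corollaries~\ref{corollary-3.7}--\ref{corollary-3.11} in one stroke by transfer: Theorem~\ref{theorem-3.6} identifies $H$-closedness of a linearly ordered semilattice in the class of semitopological semilattices with $H$-closedness in the class of topological semilattices, and the corresponding hereditary statements for ${\uparrow}e$ and ${\downarrow}e$, already established by Gutik and Repov\v{s} in \cite{GutikRepovs2008} for the topological class, are then pulled back through this equivalence. You instead argue directly inside the semitopological class: after using Corollary~\ref{corollary-3.8} to upgrade $E$ to \emph{absolutely} $H$-closed, you realize ${\downarrow}e$ as the image of the translation $\rho_e(x)=xe$ (continuous by separate continuity, a homomorphism by commutativity and idempotency) and ${\uparrow}e$ as the image of $\varphi_e(x)=x\vee e$, whose multiplicativity is the identity $(x\wedge y)\vee e=(x\vee e)\wedge(y\vee e)$, valid in any chain, and whose continuity you obtain by pasting the identity map on the closed set ${\uparrow}e=\rho_e^{-1}(\{e\})$ with the constant map $e$ on the closed set ${\downarrow}e=\{x\colon xe=x\}$; both sets are closed since $\rho_e$ is continuous and $E$ is Hausdorff, they cover $E$ by linearity, and the two restrictions agree at $e$. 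A second application of Corollary~\ref{corollary-3.8} then yields the parenthetical strengthening. Your route is self-contained modulo Corollary~\ref{corollary-3.8} and does not need the specific results on upper and lower sets from \cite{GutikRepovs2008}; the retraction device $x\mapsto x\wedge e$, $x\mapsto x\vee e$ is moreover reusable in similar situations. The paper's route buys brevity and uniformity, obtaining all five corollaries at once from the same transfer principle.
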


\begin{corollary}\label{corollary-3.11}
Every linearly ordered semitopological semilattice is a dense subsemilattice of an $H$-closed semitopological semilattice in the class of semitopological semilattices. \end{corollary}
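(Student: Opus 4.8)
The plan is to reduce the problem to the already-established topological-semilattice case and then transfer the conclusion through Theorem~\ref{theorem-3.6}.

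First I would observe that, by Proposition~\ref{proposition-3.5}, the given linearly ordered Hausdorff semitopological semilattice $E$ is automatically a topological semilattice. This places us in the class of topological semilattices, where the completion machinery of \cite{GutikRepovs2008} is available. Thus there is nothing lost by treating $E$ from the outset as a linearly ordered topological semilattice.

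Next I would invoke the corresponding embedding result of \cite{GutikRepovs2008}: every linearly ordered topological semilattice is a dense subsemilattice of a linearly ordered topological semilattice $\widehat{E}$ which is $H$-closed in the class of topological semilattices. Concretely, $\widehat{E}$ is produced by a Dedekind-type completion of the underlying chain, adjoining the missing suprema and infima so that conditions $(i)$--$(iii)$ of Corollary~\ref{corollary-3.7} are met, and then equipping the result with the order topology; the semilattice operation, being the meet of the linear order, stays (separately) continuous and $E$ sits densely inside $\widehat{E}$. This is the step where the genuine work resides, and it is exactly the content imported from \cite{GutikRepovs2008}.

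Finally, since $\widehat{E}$ is a linearly ordered topological semilattice that is $H$-closed in the class of topological semilattices, Theorem~\ref{theorem-3.6} yields that $\widehat{E}$ is $H$-closed in the class of semitopological semilattices as well. Hence $E$ embeds as a dense subsemilattice of the semitopological semilattice $\widehat{E}$, which is $H$-closed in the class of semitopological semilattices, and the corollary follows. The main obstacle is guaranteeing that the completion $\widehat{E}$ supplied by \cite{GutikRepovs2008} simultaneously satisfies the $H$-closedness criterion of Corollary~\ref{corollary-3.7} and carries a topology rendering the embedding dense and the operation continuous; once this is secured, Theorem~\ref{theorem-3.6} does the remaining work of passing between the two classes.
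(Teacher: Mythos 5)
Your proposal is correct and follows essentially the same route the paper intends: the paper derives Corollaries~\ref{corollary-3.7}--\ref{corollary-3.11} precisely by combining Theorem~\ref{theorem-3.6} with the results of \cite{GutikRepovs2008}, and Proposition~\ref{proposition-3.5} is what licenses treating the given semitopological semilattice as a topological one before invoking the dense-embedding (completion) theorem of \cite{GutikRepovs2008} and transferring $H$-closedness back via Theorem~\ref{theorem-3.6}. Your write-up just makes explicit the chain of implications that the paper leaves implicit.
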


\begin{remark}\label{remark-3.12}
Theorem~\ref{theorem-3.6}, Example~7 and Proposition~8 from \cite{GutikRepovs2008} imply that there exists a countable linearly ordered $\sigma$-compact $0$-dimensional scattered locally compact metrizable topological semilattice which does not embeds into any compact Hausdorff semitopological semilattice.
\end{remark}

At the finish of this section we construct an $H$-closed semitopological semilattice in the class of semitopological semilattices which is not absolutely $H$-closed in the class of semitopological semilattices.

A filter $\mathscr{F}$ on a set $X$ is called \emph{free} if $\bigcap\mathscr{F}=\varnothing$.

\begin{example}[\cite{BardylaGutik2012}]\label{example-3.13}
Let $\mathbb{N}$ denote the set of positive integers. For
each free filter $\mathscr{F}$ on $\mathbb{N}$ consider the topological space $\mathbb{N}_{\mathscr{F}}=\mathbb{N}\cup \{\mathscr{F}\}$ in which all points $x\in\mathbb{N}$ are isolated while the sets $F\cup\{\mathscr{F}\}$, $F\in\mathscr{F}$, form a neighbourhood base at the unique non-isolated point $\mathscr{F}$.

The semilattice operation $\min$ of $\mathbb{N}$ extends to a continuous semilattice operation $\min$ on $\mathbb{N}_{\mathscr{F}}$ such that $\min\{n,\mathscr{F}\}= \min\{\mathscr{F},n\}=n$  and $\min\{\mathscr{F},\mathscr{F}\}=\mathscr{F}$ for all $n\in\mathbb{N}$. By $\mathbb{N}_{\mathscr{F},\min}$  we shall denote the topological space $\mathbb{N}_{\mathscr{F}}$ with the semilattice operation $\min$. Simple verifications show that $\mathbb{N}_{\mathscr{F},\min}$ is a topological semilattice. Then by Theorem~2$(i)$ of \cite{BardylaGutik2012} the topological semilattice $\mathbb{N}_{\mathscr{F},\min}$ is $H$-closed in the class of topological semilattices and hence by Theorem~\ref{theorem-3.6} it is $H$-closed in the class of semitopological semilattices.
\end{example}

Later by $E_2=\{0,1\}$ we denote the discrete topological semilattice with the semilattice operation $\min$.

\begin{theorem}\label{theorem-3.14}
Let $\mathscr{F}$ be a free filter on $\mathbb{N}$ and $F\in\mathscr{F}$ be a set with infinite complement $\mathbb{N}\setminus F$. Then the closed subsemilattice $E=\left(\mathbb{N}_{\mathscr{F},\min}\times\{0\}\right)\cup \left((\mathbb{N}\setminus F)\times\{1\}\right)$ of the direct product $\mathbb{N}_{\mathscr{F},\min}\times E_2$ is $H$-closed not absolutely $H$-closed in the class of semitopological semilattices.
\end{theorem}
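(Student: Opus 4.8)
The plan is to prove the two assertions separately. For $H$-closedness I would argue that $E$ is closed in every semitopological semilattice containing it; for the failure of absolute $H$-closedness I would exhibit a single continuous homomorphic image of $E$ that is not $H$-closed.

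For the $H$-closedness, let $T$ be a semitopological semilattice containing $E$ as a subsemilattice and a topological subspace. By Corollary~\ref{corollary-3.2} the closure $\operatorname{cl}_T(E)$ is again a subsemilattice, so replacing $T$ by $\operatorname{cl}_T(E)$ I may assume $E$ is dense in $T$ and aim to show $T=E$. The key preliminary observation is that the ``bottom copy'' $L_0=\mathbb{N}_{\mathscr{F},\min}\times\{0\}$ is a subsemilattice of $T$ carrying (via the embedding $E\hookrightarrow T$) exactly the topology of $\mathbb{N}_{\mathscr{F},\min}$; by Example~\ref{example-3.13} this semilattice is $H$-closed in the class of semitopological semilattices, so $L_0$ is closed in $T$. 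Now fix $z\in T$ and a net $e_\alpha=(a_\alpha,i_\alpha)\to z$ with $e_\alpha\in E$. If some cofinal subnet lies in $L_0$, then closedness of $L_0$ gives $z\in L_0\subseteq E$. Otherwise the net is eventually at level $1$, so $a_\alpha\in\mathbb{N}\setminus F$; applying the continuous translation $x\mapsto x\cdot(\mathscr{F},0)$ of $T$ sends $e_\alpha$ to $(a_\alpha,0)\in L_0$, whence $z\cdot(\mathscr{F},0)=\lim_\alpha(a_\alpha,0)=:(c,0)\in L_0$ and, inside $L_0\cong\mathbb{N}_{\mathscr{F},\min}$, $a_\alpha\to c$.

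The conceptual heart of this direction, and the step I expect to be the main obstacle, is ruling out $c=\mathscr{F}$: if $a_\alpha\to\mathscr{F}$ then, since $F\in\mathscr{F}$, the net $(a_\alpha)$ must eventually lie in the neighbourhood $F\cup\{\mathscr{F}\}$ of $\mathscr{F}$, which is disjoint from $\mathbb{N}\setminus F$ — contradicting $a_\alpha\in\mathbb{N}\setminus F$. This filter-separation argument is precisely where the hypothesis $F\in\mathscr{F}$ is used. Consequently $c\in\mathbb{N}$ is isolated in $\mathbb{N}_{\mathscr{F},\min}$, so $a_\alpha$ is eventually equal to $c$, the net $e_\alpha$ is eventually the constant $(c,1)$, and $z=(c,1)\in E$ (note $c\in\mathbb{N}\setminus F$). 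Thus $T=E$ and $E$ is $H$-closed. A byproduct of the same separation observation is that $L_1=(\mathbb{N}\setminus F)\times\{1\}$ has no limit point in $E$, hence $L_1$ is closed and $L_0$ is open in $E$; I will reuse this for continuity below.

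For the failure of absolute $H$-closedness I would construct an explicit continuous homomorphism that collapses the bottom copy and frees the discrete chain on level $1$. Enumerate $\mathbb{N}\setminus F=\{m_1<m_2<\cdots\}$ and let $Y=\{e\}\cup\{y_k\colon k\in\mathbb{N}\}$ be the discrete semilattice with $\min$-operation and order $e<y_1<y_2<\cdots$. Define $h\colon E\to Y$ by $h(x)=e$ for $x\in L_0$ and $h(m_k,1)=y_k$. A routine check of the three cases shows $h$ is a semilattice homomorphism (the only nontrivial identity, $(m_k,1)\cdot(m_{k'},1)=(m_{\min(k,k')},1)$, matches $\min(y_k,y_{k'})=y_{\min(k,k')}$, while any product involving level $0$ lands in $L_0$ and maps to $e$). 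Continuity is immediate: $Y$ is discrete, the level-$1$ points are isolated, and $h^{-1}(e)=L_0$ is open in $E$ by the remark above. Finally, $Y$ is an infinite discrete chain and hence is not $H$-closed: adjoining a top element $\omega$ with tail neighbourhoods $\{\omega\}\cup\{y_k\colon k\geqslant N\}$ yields a semitopological (indeed topological) semilattice in which $Y$ is a proper dense subsemilattice. Therefore $h(E)=Y$ is a continuous homomorphic image of $E$ that is not $H$-closed, so $E$ is not absolutely $H$-closed, completing the proof.
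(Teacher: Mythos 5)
Your proof is correct, and for both halves of the theorem it takes a genuinely different route than the paper. The two proofs of $H$-closedness share the same two pillars: the $H$-closedness of the bottom copy $L_0=\mathbb{N}_{\mathscr{F},\min}\times\{0\}$ (Example~\ref{example-3.13}, i.e.\ Theorem~2$(i)$ of \cite{BardylaGutik2012} combined with Theorem~\ref{theorem-3.6}), which makes $L_0$ closed in any ambient semitopological semilattice, and the separation fact that $F\cup\{\mathscr{F}\}$ is a neighbourhood of $\mathscr{F}$ disjoint from $\mathbb{N}\setminus F$. But the mechanics differ. The paper argues by contradiction: it fixes $a\in S\setminus E$, uses that $L_0$ is a closed \emph{ideal} of $S$ to place the products $x\cdot a$ (for $x\in L_0$) inside $L_0$, claims they must equal $(\mathscr{F},0)$, and then contradicts separate continuity via the neighbourhood base $\left\{(G\cup\{\mathscr{F}\})\times\{0\}\colon G\in\mathscr{F}\right\}$ at $(\mathscr{F},0)$. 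You argue directly: an approximating net at level $1$ is pushed down into the closed copy $L_0$ by the single right translation $x\mapsto x\cdot(\mathscr{F},0)$, and the filter separation forces its limit to be an \emph{isolated} point of $\mathbb{N}_{\mathscr{F},\min}$, so the net is eventually constant and its limit already lies in $E$. Your version is cleaner and more robust: in particular it sidesteps the paper's intermediate claim that $x\cdot U(a)$ is infinite for $x=(n,0)$, which is delicate as written, since $(n,0)\cdot\left((\mathbb{N}\setminus F)\times\{1\}\right)=\left\{(\min\{n,m\},0)\colon m\in\mathbb{N}\setminus F\right\}$ is actually a finite set. For the failure of absolute $H$-closedness the paper simply cites Theorem~3 of \cite{BardylaGutik2012}, whereas you construct an explicit continuous homomorphism collapsing the (open, as you correctly note) ideal $L_0$ to the bottom of an infinite discrete chain $Y$ and carrying the level-$1$ points bijectively onto the rest of $Y$, then verify $Y$ is not $H$-closed by adjoining a top point with tail neighbourhoods. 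This makes the whole theorem self-contained, essentially reproving the cited result in this special case, at the cost of some length; the paper's citation buys brevity but leaves the key counterexample hidden in the reference.
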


\begin{proof}
The definition of the topological semilattice $\mathbb{N}_{\mathscr{F},\min}\times E_2$ implies that $E$ is a closed subsemilattice of $\mathbb{N}_{\mathscr{F},\min}\times E_2$.

Suppose the contrary: the topological semilattice $E$ is not $H$-closed in the class of semitopological semilattices. Since the closure of a subsemilattice in a semitopological semilattice is a semilattice (see \cite[Chapter~I,~Proposition~1.8$(ii)$]{Ruppert1984}) we conclude that there exists a semitopological semilattice $S$ which contains $E$ as a dense subsemilattice and $S\setminus E\neq\varnothing$. We fix an arbitrary $a\in S\setminus E$. Then for every open neighbourhood $U(a)$ of the point $a$ in $S$ we have that the set $U(a)\cap E$ is infinite. By Theorem~2$(i)$ of \cite{BardylaGutik2012} and Theorem~\ref{theorem-3.6}, the subspace $\mathbb{N}_{\mathscr{F},\min}\times\{0\}$ of $E$ with the induced semilattice operation from $E$ is an $H$-closed in the class of semitopological semilattices. Therefore there exists an open neighbourhood $U(a)$ of the point $a$ in $S$ such that $U(a)\cap E\subseteq (\mathbb{N}\setminus F)\times\{1\}$ and hence the set $U(a)\cap((\mathbb{N}\setminus F)\times\{1\})$ is infinite.

Since the subset $\mathbb{N}_{\mathscr{F},\min}\times\{0\}$ is an ideal of $E$, the $H$-closedness of $\mathbb{N}_{\mathscr{F},\min}\times\{0\}$ in the class of semitopological semilattices implies that $\mathbb{N}_{\mathscr{F},\min}\times\{0\}$ is a closed ideal in $S$ and hence we have that $x\cdot a\in \mathbb{N}_{\mathscr{F},\min}\times\{0\}$ for every $x\in \mathbb{N}_{\mathscr{F},\min}\times\{0\}$. Since for every open neighbourhood $U(a)$ of the point $a$ in $S$ the set $U(a)\cap((\mathbb{N}\setminus F)\times\{1\})$ is infinite the semilattice operation in $E$ implies that for every $x\in (\mathbb{N}_{\mathscr{F},\min}\times\{0\})\setminus\{(\mathscr{F},0)\}$ the set $x\cdot U(a)$ is infinite and hence we have that $x\cdot a\notin N\times\{0\}=(\mathbb{N}_{\mathscr{F},\min}\times\{0\})\setminus\{(\mathscr{F},0)\}$. Therefore we obtain that $x\cdot a=(\mathscr{F},0)$. Now, since in $\mathbb{N}_{\mathscr{F},\min}$ the sets $F\cup\{\mathscr{F}\}$, $F\in\mathscr{F}$, form a neighbourhood base at the unique non-isolated point $\mathscr{F}$, we conclude that $x\cdot U(a)\nsubseteq (F\cup\{\mathscr{F}\})\times \{0\}$, which contradicts the separate continuity of the semilattice operation on $S$. Hence we get that $S\setminus E=\varnothing$. This implies that the topological semilattice $E$ is $H$-closed in the class of semitopological semilattices.

Now, by Theorem~3 of \cite{BardylaGutik2012} the topological semilattice $E$ is not absolutely $H$-closed in the class of topological semilattices, and hence $E$ is not absolutely $H$-closed in the class of semitopological semilattices.
\end{proof}

\begin{remark}\label{remark-3.15}
Corollary~\ref{corollary-3.2} implies that the topological semilattice $E$ determined in Theorem~\ref{theorem-3.14} is an example a topological inverse semigroup which is  $H$-closed but is not absolutely $H$-closed in the class of semitopological semigroups with continuous inversion.
\end{remark}

\begin{remark}\label{remark-3.16}
Proposition~\ref{proposition-3.5} and Theorem~\ref{theorem-3.6} imply that Theorem~2 of \cite{BardylaGutik2012} describes all $H$-closed semilattices in the class of semitopological semilattices which contain the discrete semilattice $(\mathbb{N}, \min)$ or the discrete semilattice $(\mathbb{N},\max)$ as a dense subsemilattice.
\end{remark}
%%%%%%%%%%%%%%%%%%%%%%%%%%%%%%%%%%%%%%%%%%%%%%%%%%%%%

\section{On the closure of topological Brandt $\lambda$-extensions in a semitopological inverse semigroup with continuous inversion}\label{s4}

In this section we study the preserving of $H$-closedness and absolute $H$-closedness by topological Brandt $\lambda^0$-extensions and orthogonal sums of semitopological semigroups.

\begin{theorem}\label{theorem-4.1}
Let $S$ be a Hausdorff semitopological inverse monoid with zero and continuous inversion. Then the following conditions are equivalent:
\begin{enumerate}
  \item[$(i)$] $S$ is absolutely $H$-closed in the class of semitopological inverse semigroups with continuous inversion;
  \item[$(ii)$] there exists a cardinal $\lambda\geqslant 2$ such that every topological Brandt $\lambda^0$-extension of $S$ is absolutely $H$-closed in the class of semitopological inverse semigroups with continuous inversion;
  \item[$(iii)$] for each cardinal $\lambda\geqslant 2$ every topological Brandt $\lambda^0$-extension of $S$ is absolutely $H$-closed in the class of semitopological inverse semigroups with continuous inversion.
\end{enumerate}
\end{theorem}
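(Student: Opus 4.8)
The implication $(iii)\Rightarrow(ii)$ is immediate, so the substance lies in $(i)\Rightarrow(iii)$ and $(ii)\Rightarrow(i)$. I would organize both around one core lemma: \emph{if a semitopological inverse monoid $S$ with zero and continuous inversion is $H$-closed in the class, then so is every topological Brandt $\lambda^0$-extension of $S$.} Everything else is a matter of transporting this lemma across continuous homomorphisms and across the passage from base to extension.

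To prove the core lemma, write $e_\alpha=(\alpha,1_S,\alpha)$ and suppose $B:=B^0_\lambda(S)$ is a dense subsemigroup of some $T$ in the class. For each pair $\alpha,\beta$ the map $\phi_{\alpha\beta}\colon T\to T$, $x\mapsto e_\alpha x e_\beta$, is continuous (a composite of one left and one right translation, both continuous by separate continuity) and idempotent (since $e_\alpha,e_\beta$ are idempotents); hence its image $e_\alpha Te_\beta$ equals its fixed-point set and is closed, by the fixed-point fact already used in the proof of Proposition~\ref{proposition-3.1}. Applying $\phi_{\alpha\beta}$ to $\operatorname{cl}_T(B)=T$ gives $e_\alpha Te_\beta=\operatorname{cl}_T(S_{\alpha,\beta})$, so $S_{\alpha,\beta}$ is dense in the closed block $e_\alpha Te_\beta$. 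Multiplication by matrix units on the left and right furnishes mutually inverse homeomorphisms between these blocks and identifies every diagonal block with $S_{\alpha,\alpha}\cong S$; since $S$ is $H$-closed, each $S_{\alpha,\alpha}$ is closed and hence fills out $e_\alpha Te_\alpha$, and transporting this along the matrix-unit homeomorphisms yields $e_\alpha Te_\beta=S_{\alpha,\beta}$ for all $\alpha,\beta$. Finally, for $x\in T$ write $x=(xx^{-1})\,x\,(x^{-1}x)$: if the two idempotents $xx^{-1}$ and $x^{-1}x$ lie in $\bigcup_\alpha S_{\alpha,\alpha}$, say in $S_{\alpha,\alpha}$ and $S_{\beta,\beta}$, then $x=e_\alpha x e_\beta\in S_{\alpha,\beta}\subseteq B$. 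Thus the identity $T=B$ is reduced to showing that $T$ has no idempotents outside $\{0\}\cup\bigcup_\alpha S_{\alpha,\alpha}$.

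For $(i)\Rightarrow(iii)$, given $S$ absolutely $H$-closed and a continuous homomorphism $h\colon B^0_\lambda(S)\to T$, I would use the description of homomorphisms of Brandt $\lambda^0$-extensions from \cite{GutikRepovs2010} to identify $(B^0_\lambda(S))h$ with a topological Brandt $\mu^0$-extension of $(S_{\alpha,\alpha})h$; its base is a continuous homomorphic image of $S\cong S_{\alpha,\alpha}$, hence $H$-closed, so the core lemma makes the image $H$-closed, as required. For $(ii)\Rightarrow(i)$, fix $\lambda\geqslant 2$ with $B^0_\lambda(S)$ absolutely $H$-closed. To see $S$ is $H$-closed, embed $S$ densely in some $R$ in the class and equip $B^0_\lambda(R)$ with the Brandt-extension topology induced from $R$, so that it lies in the class and contains $B^0_\lambda(S)$ densely (with $S_{\alpha,\beta}$ dense in $R_{\alpha,\beta}$ and a common zero); as $B^0_\lambda(S)$ is closed in it, $B^0_\lambda(S)=B^0_\lambda(R)$ and $S=R$. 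For absolute $H$-closedness I would run the same descent on the continuous induced homomorphism $B^0_\lambda(g)\colon(\alpha,s,\beta)\mapsto(\alpha,(s)g,\beta)$ attached to a continuous homomorphism $g\colon S\to R$: its image is a topological Brandt $\lambda^0$-extension of $(S)g$, hence $H$-closed, and the preceding construction descends $H$-closedness from the extension to its base $(S)g$.

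The main obstacle is the last step of the core lemma: excluding idempotents of $T$ outside the diagonal blocks. Because the operation is only separately continuous, one cannot pass to limits of products, so the comfortable argument that a limit of idempotents is idempotent is unavailable. Instead I would squeeze topological information about the zero out of separate continuity alone: left and right translation by $e_\alpha$ collapse every block not indexed by $\alpha$ to $0$, and the continuity of these translations forces neighbourhoods of $0$ to absorb all such blocks; this should pin any net meeting infinitely many blocks to the limit $0$, so that a nonzero idempotent limit is trapped in the closure of a single diagonal block $S_{\alpha,\alpha}$, already known to be closed. Making this precise for an \emph{arbitrary} admissible topology $\tau_B$, which need not even be regular (cf. Example~\ref{example-2.8}), is the genuinely delicate point, alongside the verification in $(ii)\Rightarrow(i)$ that the Brandt topology on $B^0_\lambda(R)$ extends that of $B^0_\lambda(S)$ and keeps the whole extension inside the class.
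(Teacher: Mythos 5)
Your architecture is sound and large parts of it track the paper's own proof: your $(ii)\Rightarrow(i)$ descent via the ``box'' topologies on $B^0_\lambda(S)$ and $B^0_\lambda(R)$ and the induced homomorphism $(\alpha,s,\beta)\mapsto(\alpha,(s)g,\beta)$ is essentially identical to the paper's construction, and your route to the closedness of the blocks is actually cleaner than the paper's: you observe that $x\mapsto e_\alpha x e_\beta$ is a continuous idempotent self-map of $T$, so its image $e_\alpha Te_\beta$ is a fixed-point set and hence closed, whereas the paper gets closedness of $(S_{\gamma,\delta})h$ by exhibiting it as a retract of a closed preimage. Your steps through $e_\alpha Te_\beta=\operatorname{cl}_T(S_{\alpha,\beta})$, the matrix-unit homeomorphisms between blocks, the use of $H$-closedness of $S$ on the diagonal, and the reduction via $x=(xx^{-1})x(x^{-1}x)$ are all correct and reduce the core lemma to the single claim $E(T)\subseteq\{0\}\cup\bigcup_\alpha S_{\alpha,\alpha}$.

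But precisely at that claim there is a genuine gap, and it is the heart of the theorem: it is where the paper spends the bulk of its proof (cases $a)$ and $b)$). Your sketch --- ``the continuity of these translations forces neighbourhoods of $0$ to absorb all such blocks; this should pin any net meeting infinitely many blocks to the limit $0$'' --- is not a proof and mis-identifies the mechanism: separate continuity gives no control of a neighbourhood of $0$ over entire blocks, and the assertion that every limit point of a set meeting infinitely many blocks equals $0$ is essentially equivalent to the conclusion $T=B$ you are trying to prove, so positing it is circular. The paper's argument is pointwise and uses a dichotomy you never formulate. Let $x\in T\setminus B$ be an idempotent; since $B$ is dense and all blocks are closed, every neighbourhood $U(x)$ meets infinitely many blocks, and (by Lemma~23 of \cite{GutikPavlyk2013a}) the zero of $B$ is the zero of $T$. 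Fix a non-zero idempotent $\epsilon=(\alpha,e,\alpha)$ of $B$: any $U(x)$ contains a point of some $S_{\gamma,\delta}$ with $\gamma\neq\alpha$ or $\delta\neq\alpha$, so $0\in \epsilon\cdot U(x)$ or $0\in U(x)\cdot\epsilon$ for \emph{every} $U(x)$; Hausdorffness of $T$ together with continuity of the two translations by $\epsilon$ then forces $\epsilon x=0$ or $x\epsilon=0$, and since idempotents of the inverse semigroup $T$ commute, both products vanish. Writing $(\alpha,s,\beta)=(\alpha,ss^{-1},\alpha)(\alpha,s,\beta)$ shows $x$ annihilates all of $B$, hence $0\in x\cdot U(x)$ for every $U(x)$, and the same Hausdorffness/separate-continuity argument yields $x\cdot x=0$, i.e.\ $x=0\in B$, a contradiction; a parallel two-sided argument with $xx^{-1}$ and $x^{-1}x$ (which by the first step lie in $E(B)$) eliminates non-idempotent points of $T\setminus B$. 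Without this dichotomy argument your core lemma is unproven, and with it fall all of $(i)\Rightarrow(iii)$, and $(ii)\Rightarrow(i)$ insofar as it invokes the descent from the extension to its base.
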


\begin{proof}
$(i)\Rightarrow (iii)$. Suppose that the semigroup $S$ is absolutely $H$-closed in the class of semitopological inverse semigroups with continuous inversion. We fix an arbitrary cardinal $\lambda\geqslant 2$. Let $B_\lambda^0(S)$ be a topological Brandt $\lambda^0$-extension of $S$ in the class of semitopological inverse semigroups with continuous inversion, $T$ be a semitopological inverse semigroup with continuous inversion  and $h\colon B_\lambda^0(S)\to T$ be a continuous homomorphism.

First we observe that by Proposition~2.3 of \cite{GutikRepovs2010}, either $h$ is an annihilating homomorphism or the image $(B_{\lambda}^0(S))h$ is isomorphic to the Brandt $\lambda^0$-extension $B_{\lambda}^0((S_{\alpha,\alpha})h)$ of the semigroup $(S_{\alpha,\alpha})h$ for some $\alpha\in\lambda$. If $h$ is an annihilating homomorphism then $(S_{\alpha,\alpha})h$ is a singleton, and therefore we have that $(S_{\alpha,\alpha})h$ is a closed subset of $T$. Hence, later we assume that $h$ is a non-annihilating homomorphism.

Next we show that for any $\gamma,\delta\in\lambda$ the set $(S_{\gamma,\delta})h$ is closed in the space $T$. By Definition~\ref{def2} there exists $\alpha\in \lambda$ such that $(S_{\alpha,\alpha})h$ is a closed subset of $T$. We define the maps $\varphi_h,\psi_h\colon T\to T$ by the formulae $(x)\varphi_h=(\alpha,1_S,\gamma)h\cdot (x)h\cdot(\delta,1_S,\alpha)h$ and $(x)\psi_h=(\gamma,1_S,\alpha)h\cdot (x)h\cdot(\alpha,1_S,\delta)h$. Then the maps $\varphi_h$ and $\psi_h$ are continuous because left and right translations in $T$ and homomorphism $h\colon B_\lambda^0(S)\to T$ are continuous maps. Thus, the full preimage $A=((S_{\alpha,\alpha})h)\varphi^{-1}_h$ is a closed subset of $T$. Then the restriction map $(\varphi_h\circ\psi_h)|_{A}\colon A\to (S_{\gamma,\delta})h$ is a retraction, and therefore the set $(S_{\gamma,\delta})h$ is a retract of $A$. This implies that $(S_{\gamma,\delta})h$ is a closed subset of $T$.

Suppose to the contrary that $(B_\lambda^0(S))h$ is not a closed subsemigroup of $T$. By Lemma~II.1.10 of \cite{Petrich1984}, $(B_\lambda^0(S))h$ is an inverse subsemigroup of $T$. Since by Proposition~\ref{proposition-2.2} the closure of an inverse subsemigroup $(B_\lambda^0(S))h$ in a semitopological inverse semigroup $T$ with continuous inversion is an inverse semigroup, without loss of generality we may assume that $(B_\lambda^0(S))h$ is a dense proper inverse subsemigroup of $T$.

We fix an arbitrary $x\in \operatorname{cl}_{T}((B_\lambda^0(S))h)\setminus (B_\lambda^0(S))h$. Then only one of the following cases holds:
\begin{enumerate}
  \item[$a)$] $x$ is an idempotent of the semigroup $T$;
  \item[$b)$] $x$ is a non-idempotent element of $T$.
\end{enumerate}

Suppose that case $a)$ holds. By the previous part of the proof we have that every open neighbourhood $U(x)$ of the point $x$ in the topological space $T$ intersects infinitely sets of the form $(S_{\alpha,\beta})h$, $\alpha,\beta\in\lambda$. By Proposition~2.3 of \cite{GutikRepovs2010}, $(B_{\lambda}^0(S))h$ is isomorphic to the Brandt $\lambda^0$-extension $B_{\lambda}^0((S_{\alpha,\alpha})h)$ of the semigroup $(S_{\alpha,\alpha})h$ for some $\alpha\in\lambda$, and since $(B_{\lambda}^0(S))h$ is a dense subsemigroup of semitopological semigroup $T$, the zero $0$ of the semigroup $(B_{\lambda}^0(S))h$ is zero of $T$ (see \cite[Lemma~23]{GutikPavlyk2013a}). Then the semigroup operation of $B_{\lambda}^0((S_{\alpha,\alpha})h)$ implies that either $0\in (\alpha,e,\alpha)h\cdot U(x)$ or $0\in U(x)\cdot(\alpha,e,\alpha)h$ for every non-zero idempotent $(\alpha,e,\alpha)$ of $B_{\lambda}^0(S)$, $e\in E(S)$, $\alpha\in\lambda$. Now by the Hausdorffness of the space $T$ and the separate continuity of the semigroup operation of $T$ we have that either $(\alpha,e,\alpha)h\cdot x=0$ or $x\cdot(\alpha,e,\alpha)h=0$ for every non-zero idempotent $(\alpha,e,\alpha)$ of $B_{\lambda}^0(S)$, $e\in E(S)$, $\alpha\in\lambda$. Since in an inverse semigroup any two idempotents commute we conclude that $(\alpha,e,\alpha)h\cdot x=x\cdot(\alpha,e,\alpha)h=0$ for every non-zero idempotent $(\alpha,e,\alpha)$ of the semigroup $B_{\lambda}^0(S)$.

We fix an arbitrary non-zero element $(\alpha,s,\beta)h$ of the semigroup $(B_{\lambda}^0(S))h$, where $\alpha,\beta\in\lambda$ and $s\in S^*$. Then by the previous part of the proof we obtain that
\begin{equation*}
    x\cdot (\alpha,s,\beta)h=x\cdot (\alpha,ss^{-1}s,\beta)h= x\cdot ((\alpha,ss^{-1},\alpha)(\alpha,s,\beta))h= x\cdot (\alpha,ss^{-1},\alpha)h\cdot(\alpha,s,\beta)h=0\cdot(\alpha,s,\beta)h=0
\end{equation*}
and
\begin{equation*}
    (\alpha,s,\beta)h\cdot x=(\alpha,ss^{-1}s,\beta)h\cdot x= ((\alpha,s,\beta)(\beta,s^{-1}s,\beta))h\cdot x= (\alpha,s,\beta)h\cdot (\beta,s^{-1}s,\beta)h\cdot x= (\alpha,s,\beta)h\cdot 0=0.
\end{equation*}
This implies that for every open neighbourhood $U(x)$ of the point $x$ in the space $T$ we have that $0\in x\cdot U(x)$ and $0\in U(x)\cdot x$. Then by Hausdorffness of the space $T$ and the separate continuity of the semigroup operation in $T$ we get that $x\cdot x=0$, and hence $x=0$. This implies that $E(T)=E((B_{\lambda}^0(S))h)$.

Suppose that case $b)$ holds. If $xx^{-1}=0$, then $x=xx^{-1}x=0\cdot x=0$, and similarly if $x^{-1}x=0$, then $x=xx^{-1}x=x\cdot 0=0$. This implies that $xx^{-1}, x^{-1}x\in E((B_{\lambda}^0(S))h)\setminus\{0\}$.

Then by Lemma~I.7.10 of \cite{Petrich1984} there exist idempotents $(\alpha,e,\alpha),(\beta,f,\beta)\in B_{\lambda}^0(S)$ such that $xx^{-1}=(\alpha,e,\alpha)h$ and $x^{-1}x=(\beta,f,\beta)h$, where $e,f\in (E(S))^*$ and $\alpha,\beta\in\lambda$. Then we have that $x\cdot(\beta,f,\beta)h=(\alpha,e,\alpha)h\cdot x=x$. Since $x\in \operatorname{cl}_T((B_{\lambda}^0(S))h)\setminus (B_{\lambda}^0(S))h$, every open neighbourhood $U(x)$ of the point $x$ in the space $T$ intersects infinitely many sets $(S_{\gamma,\delta})h$, $\gamma,\delta\in\lambda$, and hence we obtain that either $U(x)\cdot(\beta,f,\beta)h\ni 0$ or $(\alpha,e,\alpha)h\cdot U(x)\ni 0$. Then the Hausdorffness of the space $T$ and the separate continuity of the semigroup operation on $T$ imply that $x\cdot(\beta,f,\beta)h=0$ or $(\alpha,e,\alpha)h\cdot x=0$. If $x\cdot(\beta,f,\beta)h=0$ then $x=x\cdot xx^{-1}=x\cdot(\beta,f,\beta)h=0$ and if $(\alpha,e,\alpha)h\cdot x=0$ then $x=xx^{-1}x=(\alpha,e,\alpha)h\cdot x=0$. All these two cases imply that $x=0$, and hence we get that $T=(B_{\lambda}^0(S))h$, which completes the proof of our theorem.

The implication $(iii)\Rightarrow (ii)$ is trivial.

$(ii)\Rightarrow (i)$. Suppose to the contrary: there exists semigroup $S$ such that $S$ is not absolutely $H$-closed semigroup $S$ in the class of semitopological inverse semigroups with continuous inversion and condition $(ii)$ holds for $S$. Then there exists a semitopological inverse semigroup $T$ with continuous inversion and continuous homomorphism $h\colon S\to T$ such that $(S)h$ is non-closed subset of $T$. Now, by Proposition~\ref{proposition-2.2} without loss of generality we may assume that $(S)h$ is a proper dense inverse subsemigroup of $T$.

Next, for the cardinal $\lambda$ we define topologies $\tau_T^B$ and $\tau_S^B$ on Brandt $\lambda^0$-extensions $B_{\lambda}^0(T)$ and $B_{\lambda}^0(S)$, respectively, in the following way. We put
\begin{equation*}
    \mathscr{B}_{(\alpha,t,\beta)}^T=\left\{(U(t))_{\alpha,\beta}\colon 0\notin U(t)\in\mathscr{B}_T(t)\right\} \qquad \mbox{and} \qquad \mathscr{B}_{(\alpha,s,\beta)}^s=\left\{(U(s))_{\alpha,\beta}\colon 0\notin U(s)\in\mathscr{B}_S(s)\right\}
\end{equation*}
are bases of topologies $\tau_T^B$ and $\tau_S^B$ at non-zero elements $(\alpha,t,\beta)\in B_{\lambda}^0(T)$ and $(\alpha,s,\beta)\in B_{\lambda}^0(S)$, respectively, $\alpha,\beta\in\lambda$, where $\mathscr{B}_T(t)$ and $\mathscr{B}_S(s)$ are bases of topologies of spaces $T$ and $S$ at non-zero elements $t\in T$ and $s\in S$, respectively. Also, if $\mathscr{B}_T(0_T)$ and $\mathscr{B}_S(0_S)$ are bases at zeros $0_T\in T$ and $0_S\in S$ then we define
\begin{equation*}
    \mathscr{B}_{0}^T=\Big\{\{0\}\cup \bigcup_{\alpha,\beta\in\lambda}\!(U(0_T))_{\alpha,\beta}^*\colon  U(0_T)\in\mathscr{B}_T(0_T)\Big\} \quad \mbox{and} \quad \mathscr{B}_{0}^S=\Big\{\{0\}\cup \bigcup_{\alpha,\beta\in\lambda}\!(U(0_S))_{\alpha,\beta}^*\colon  U(0_S)\in\mathscr{B}_S(0_S)\Big\}
\end{equation*}
to be the bases of topologies $\tau_T^B$ and $\tau_S^B$ at zeros $0\in B_{\lambda}^0(T)$ and $0\in B_{\lambda}^0(S)$, respectively.

Simple verifications show that if $T$ and $S$ are semitopological inverse semigroups with continuous inversion, then so are $(B_{\lambda}^0(T),\tau_T^B)$ and $(B_{\lambda}^0(S),\tau_S^B)$. Also the continuity of homomorphism $h\colon S\to T$ implies that the map $h_B\colon B_{\lambda}^0(S)\to B_{\lambda}^0(T)$ defined by the formulae
\begin{equation*}
(\alpha,s,\beta)h_B=
\left\{
  \begin{array}{cl}
    (\alpha,(s)h,\beta), & \hbox{if~} (s)h\neq 0_T;\\
    0, & \hbox{otherwise},
  \end{array}
\right.
\end{equation*}
$s\in S^*$, $\alpha,\beta\in\lambda$, and $(0)h_B=0$ is continuous. Also, by Theorem~3.10 of \cite{GutikRepovs2010} so defines map $h_B\colon B_{\lambda}^0(S)\to B_{\lambda}^0(T)$ is a homomorphism. The definition of the topology $\tau_T^B$ on $B_{\lambda}^0(T)$ implies that the homomorphic image $(B_{\lambda}^0(S))h_B$ is a dense proper subsemigroup of the semitopological inverse semigroup $(B_{\lambda}^0(T),\tau_T^B)$ with continuous inversion, which contradicts to statement $(ii)$. The obtained contradiction implies the requested implication.
\end{proof}

Now, if we put $h$ is a topological isomorphic embedding of semitopological semigroups with continuous inversions in the proof of Theorem~\ref{theorem-4.1}, then we get the proof of the following theorem:

\begin{theorem}\label{theorem-4.2}
Let $S$ be a Hausdorff semitopological inverse monoid with zero and continuous inversion. Then the following conditions are equivalent:
\begin{enumerate}
  \item[$(i)$] $S$ is $H$-closed in the class of semitopological inverse semigroups with continuous inversion;
  \item[$(ii)$] there exists a cardinal $\lambda\geqslant 2$ such that every topological Brandt $\lambda^0$-extension of $S$ is $H$-closed in the class of semitopological inverse semigroups with continuous inversion;
  \item[$(iii)$] for each cardinal $\lambda\geqslant 2$ every topological Brandt $\lambda^0$-extension of $S$ is $H$-closed in the class of semitopological inverse semigroups with continuous inversion.
\end{enumerate}
\end{theorem}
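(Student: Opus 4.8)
The plan is to run the proof of Theorem~\ref{theorem-4.1} essentially verbatim, replacing at every step the phrase ``continuous homomorphism'' by ``topological isomorphic embedding''. Since an embedding is in particular a continuous homomorphism, almost all of the machinery is already in place; the two things I would have to re-examine are the single point in the $(i)\Rightarrow(iii)$ argument where absolute $H$-closedness was invoked, and the single point in the $(ii)\Rightarrow(i)$ argument where the lifted map must inherit a stronger property. The implication $(iii)\Rightarrow(ii)$ remains trivial.

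For $(i)\Rightarrow(iii)$ I would fix a cardinal $\lambda\geqslant 2$, a topological Brandt $\lambda^0$-extension $B_\lambda^0(S)$, a semitopological inverse semigroup $T$ with continuous inversion containing $B_\lambda^0(S)$ both as a subsemigroup and as a topological subspace, and take $h$ to be the inclusion map $B_\lambda^0(S)\hookrightarrow T$. Because $h$ is injective it is automatically non-annihilating, so by Proposition~2.3 of \cite{GutikRepovs2010} its image is isomorphic to $B_\lambda^0(S_{\alpha,\alpha})$ with $S_{\alpha,\alpha}$ naturally homeomorphic and isomorphic to $S$. This is precisely where $H$-closedness (rather than absolute $H$-closedness) suffices: $S_{\alpha,\alpha}$ is a topological-algebraic copy of the $H$-closed semigroup $S$ sitting inside $T$, hence closed in $T$. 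From here the argument coincides with that of Theorem~\ref{theorem-4.1} line for line: the retraction maps $\varphi_h,\psi_h$ show that each $S_{\gamma,\delta}$ is closed in $T$, and the case analysis on whether a point $x\in\operatorname{cl}_T(B_\lambda^0(S))\setminus B_\lambda^0(S)$ is idempotent (case $a$) or not (case $b$), combined with the zero of the dense subsemigroup being the zero of $T$ (Lemma~23 of \cite{GutikPavlyk2013a}), Hausdorffness, and separate continuity, forces $x=0\in B_\lambda^0(S)$. Thus $B_\lambda^0(S)$ is closed in $T$.

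For $(ii)\Rightarrow(i)$ I would argue contrapositively as in Theorem~\ref{theorem-4.1}. Assuming $S$ is not $H$-closed, there is a topological isomorphic embedding $h\colon S\to T$ into a semitopological inverse semigroup $T$ with continuous inversion whose image $(S)h$ is, by Proposition~\ref{proposition-2.2}, a proper dense inverse subsemigroup of $T$. I would then equip $B_\lambda^0(S)$ and $B_\lambda^0(T)$ with the coordinatewise topologies $\tau_S^B$ and $\tau_T^B$ defined in the proof of Theorem~\ref{theorem-4.1}, under which both become semitopological inverse semigroups with continuous inversion, and form the lifted map $h_B\colon B_\lambda^0(S)\to B_\lambda^0(T)$, which is a continuous homomorphism by Theorem~3.10 of \cite{GutikRepovs2010}. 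The crucial difference from Theorem~\ref{theorem-4.1} is that now $h_B$ must be shown to be a topological isomorphic embedding, and its image a dense proper subsemigroup of $(B_\lambda^0(T),\tau_T^B)$; this contradicts the $H$-closedness of the extension asserted in $(ii)$.

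The hard part, and the only genuinely new verification, is therefore that passage from $h$ to $h_B$ preserves the property of being a topological isomorphic embedding. Injectivity of $h_B$ follows at once from injectivity of $h$ and the defining formula for $h_B$; and openness of $h_B$ onto its image reduces, via the basic neighbourhoods $(U)_{\alpha,\beta}$ at non-zero points and the neighbourhoods $\{0\}\cup\bigcup_{\alpha,\beta}(U(0))_{\alpha,\beta}^*$ at zero, to the corresponding openness of $h$ onto its image, since the topologies $\tau_S^B,\tau_T^B$ are built coordinatewise from those of $S$ and $T$. Density and properness of $(B_\lambda^0(S))h_B$ in $B_\lambda^0(T)$ likewise follow from density and properness of $(S)h$ in $T$. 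Once these routine checks are in place the contradiction with $(ii)$ is immediate, and no further estimates are required.
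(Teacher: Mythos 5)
Your proposal is correct and follows essentially the same route as the paper: the paper's own proof of Theorem~\ref{theorem-4.2} is precisely the remark that one obtains it by taking $h$ to be a topological isomorphic embedding in the proof of Theorem~\ref{theorem-4.1}, which is what you carry out, including the two points that genuinely need re-checking (using $H$-closedness of $S$ to see that $S_{\alpha,\alpha}$ is closed in $T$, and verifying that the lifted map $h_B$ is again a topological isomorphic embedding with proper dense image under the coordinatewise topologies $\tau_S^B$, $\tau_T^B$).
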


Theorem~\ref{theorem-4.1} implies Corollary~\ref{corollary-4.3} which generalizes Corollary~20 from \cite{GutikPavlyk2005}.

\begin{corollary}\label{corollary-4.3}
For any cardinal $\lambda\geqslant 2$ the semigroup of $\lambda\times\lambda$-units $B_\lambda$ is algebraically h-complete in the class of semitopological inverse semigroups with continuous inversion.
\end{corollary}

Also, Theorems~\ref{theorem-4.1} and~\ref{theorem-4.2} imply the following corollary:

\begin{corollary}\label{corollary-4.4}
For an inverse monoid $S$ with zero the following conditions are equivalent:
\begin{enumerate}
  \item[$(i)$] $S$ is algebraically complete $($algebraically $h$-complete$)$ in the class of semitopological inverse semigroups with continuous inversion;
  \item[$(ii)$] there exists a cardinal $\lambda\geqslant 2$ such the Brandt $\lambda^0$-extension of $S$ is algebraically complete $($algebraically $h$-complete$)$ in the class of semitopological inverse semigroups with continuous inversion;
  \item[$(iii)$] for each cardinal $\lambda\geqslant 2$ the Brandt $\lambda^0$-extension of $S$ is algebraically complete $($algebraically $h$-complete$)$ in the class of semitopological inverse semigroups with continuous inversion.
\end{enumerate}
\end{corollary}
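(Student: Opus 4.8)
The plan is to derive both equivalences from a single observation identifying the compatible topologies on $B_\lambda^0(S)$, combined with Theorems~\ref{theorem-4.1} and~\ref{theorem-4.2}. Call a Hausdorff topology on a set \emph{compatible} if it turns the set into a semitopological inverse semigroup with continuous inversion. First I would record, for a fixed cardinal $\lambda\geqslant 2$, that the compatible topologies on $B_\lambda^0(S)$ are exactly the topological Brandt $\lambda^0$-extensions of the compatible topologies on $S$. Indeed, if $\sigma$ is compatible on $B_\lambda^0(S)$, then $S_{\alpha,\alpha}$ is an inverse subsemigroup naturally isomorphic to $S$, the subspace topology $\tau=\sigma|_{S_{\alpha,\alpha}}$ is compatible on $S$, and by Definition~\ref{def2} the space $(B_\lambda^0(S),\sigma)$ is a topological Brandt $\lambda^0$-extension of $(S,\tau)$; conversely, every topological Brandt $\lambda^0$-extension of a compatible $(S,\tau)$ is itself a compatible topology on $B_\lambda^0(S)$.

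For the algebraically complete case this observation, together with Theorem~\ref{theorem-4.2}, suffices. By definition $B_\lambda^0(S)$ is algebraically complete exactly when $(B_\lambda^0(S),\sigma)$ is $H$-closed for every compatible $\sigma$, which by the correspondence means that for every compatible $\tau$ on $S$ each topological Brandt $\lambda^0$-extension of $(S,\tau)$ is $H$-closed. Applying the equivalence $(i)\Leftrightarrow(iii)$ of Theorem~\ref{theorem-4.2} to each fixed $(S,\tau)$ turns this into the statement that $(S,\tau)$ is $H$-closed for every compatible $\tau$, i.e.\ that $S$ is algebraically complete. As Theorem~\ref{theorem-4.2} gives the equivalence for each cardinal separately, this chain runs for every fixed $\lambda\geqslant 2$, delivering $(i)\Leftrightarrow(iii)$; the implication $(iii)\Rightarrow(ii)$ is trivial, and $(ii)\Rightarrow(i)$ reuses the same chain for the witnessing $\lambda$, noting that all topological Brandt $\lambda^0$-extensions of a fixed $(S,\tau)$ occur among the compatible $\sigma$.

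For the algebraically $h$-complete case I would specialize everything to the discrete topology $\mathfrak d$ and use Theorem~\ref{theorem-4.1}. Here $S$ is algebraically $h$-complete iff $(S,\mathfrak d)$ is absolutely $H$-closed, and $B_\lambda^0(S)$ is algebraically $h$-complete iff $(B_\lambda^0(S),\mathfrak d)$ is absolutely $H$-closed; moreover $(B_\lambda^0(S),\mathfrak d)$ is a topological Brandt $\lambda^0$-extension of $(S,\mathfrak d)$, because its restriction to $S_{\alpha,\alpha}$ is discrete. The implications $(i)\Rightarrow(iii)\Rightarrow(ii)$ then fall out immediately: if $(S,\mathfrak d)$ is absolutely $H$-closed, Theorem~\ref{theorem-4.1} makes every topological Brandt $\lambda^0$-extension of $(S,\mathfrak d)$ absolutely $H$-closed, in particular the discrete one.

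The hard part will be the converse implication $(ii)\Rightarrow(i)$ in the $h$-complete case: algebraic $h$-completeness of $B_\lambda^0(S)$ furnishes absolute $H$-closedness of only the single discrete extension, while hypothesis $(ii)$ of Theorem~\ref{theorem-4.1} concerns \emph{all} extensions, so the statement of that theorem cannot be cited verbatim. To close this gap I would argue the contrapositive by reusing the construction inside the proof of Theorem~\ref{theorem-4.1}$(ii)\Rightarrow(i)$. If $(S,\mathfrak d)$ is not absolutely $H$-closed, then after passing to a closure via Proposition~\ref{proposition-2.2} there is a homomorphism $g\colon S\to T'$ with dense proper image in a semitopological inverse semigroup $T'$ with continuous inversion; the induced homomorphism $g_B\colon B_\lambda^0(S)\to (B_\lambda^0(T'),\tau_{T'}^B)$ again has dense proper image, and since the domain carries the discrete topology $g_B$ is automatically continuous. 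Hence $(B_\lambda^0(S),\mathfrak d)$ has a continuous homomorphic image that is not $H$-closed, contradicting $(ii)$. Alternatively, one checks that the topology $\tau_S^B$ built in that proof from $\tau=\mathfrak d$ is itself discrete, so the extension produced there is precisely $(B_\lambda^0(S),\mathfrak d)$, which makes the same conclusion transparent.
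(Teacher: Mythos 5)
Your proposal is correct and takes essentially the same route as the paper, which states this corollary without proof as an immediate consequence of Theorems~\ref{theorem-4.1} and~\ref{theorem-4.2}; your correspondence between compatible topologies on $B^0_\lambda(S)$ and topological Brandt $\lambda^0$-extensions of compatible topologies on $S$ is exactly the bridge that derivation needs. The one gap you rightly flag---that in the algebraically $h$-complete case $(ii)\Rightarrow(i)$ cannot cite Theorem~\ref{theorem-4.1} verbatim---can be closed even more cheaply than by re-running its proof: the discrete topology $\mathfrak{d}$ is the finest compatible topology on $B^0_\lambda(S)$, so every continuous homomorphism from any topological Brandt $\lambda^0$-extension $(B^0_\lambda(S),\sigma)$ of $(S,\mathfrak{d})$ is also continuous from $(B^0_\lambda(S),\mathfrak{d})$, whence absolute $H$-closedness of the discrete extension already implies it for all such extensions, and Theorem~\ref{theorem-4.1}$(ii)\Rightarrow(i)$ then applies as stated.
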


Theorems~\ref{theorem-4.3}, \ref{theorem-4.4} and~\ref{theorem-4.5} give a method of the construction of absolutely $H$-closed and $H$-closed semigroups in the class of semitopological inverse semigroups with continuous inversion.

\begin{theorem}\label{theorem-4.3}
Let $S=\bigcup_{\alpha\in\mathscr{A}}S_{\alpha}$ be a semitopological inverse semigroup with continuous inversion such that
\begin{itemize}
    \item[$(i)$] $S_{\alpha}$ is an absolutely $H$-closed semigroup in the class of semitopological inverse semigroups with continuous inversion for any $\alpha\in\mathscr{A}$; and
    \item[$(ii)$] there exists an ideal $T$ of $S$ which is absolutely $H$-closed in the class of semitopological inverse semigroups with continuous inversion such that $S_{\alpha}\cdot S_{\beta}\subseteq T$ for all $\alpha\neq\beta$, $\alpha,\beta\in\mathscr{A}$.
\end{itemize}
Then $S$ is an absolutely $H$-closed semigroup in the class of semitopological inverse semigroups with continuous inversion.
\end{theorem}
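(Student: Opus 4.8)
The plan is to verify the definition of absolute $H$-closedness directly. Given any continuous homomorphism $g\colon S\to R$ into a semitopological inverse semigroup $R$ with continuous inversion, I must show that $(S)g$ is $H$-closed. Since the subspace topology that $(S)g$ inherits from any larger container again realizes it as a continuous homomorphic image of $S$, it suffices to prove that $(S)g$ is closed in $R$; and by Proposition~\ref{proposition-2.2} the set $\operatorname{cl}_R((S)g)$ is an inverse subsemigroup, so after replacing $R$ by this closure I may assume that $(S)g$ is dense in $R$ and aim to prove $(S)g=R$. First I would record the consequences of the hypotheses: since each $S_\alpha$ and $T$ are absolutely $H$-closed, the restrictions $g|_{S_\alpha}$ and $g|_T$ have $H$-closed images, so every $(S_\alpha)g$ and $(T)g$ is closed in $R$. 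Moreover $(T)g$ is an ideal of $(S)g$ because $T$ is an ideal of $S$, and a routine density-plus-closedness argument (approximate $r\in R$ by a net from $(S)g$ and use separate continuity together with closedness of $(T)g$) upgrades this to the statement that $(T)g$ is a closed ideal of $R$.

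Next I would set up a dichotomy for an arbitrary point $x\in R=\operatorname{cl}_R((S)g)$. If $x\in(T)g$, then $x\in(S)g$ since $T\subseteq S$. So assume $x\notin(T)g$; because $(T)g$ is an ideal and $x=xx^{-1}x$, the idempotent $e:=xx^{-1}$ cannot lie in $(T)g$ either, as otherwise $x=ex\in(T)g$. Using that $(T)g$ is closed, fix open sets $W\ni x$ and $V\ni e$ with $W\cap(T)g=\varnothing$ and $V\cap(T)g=\varnothing$. By density choose a net $\big((s_i)g\big)_{i\in I}$ in $(S)g$ with $(s_i)g\to x$; passing to a tail I may assume $(s_i)g\in W$ for every $i$, whence $s_i\notin T$ and so $s_i\in S_{\alpha_i}\setminus T$ for suitable $\alpha_i\in\mathscr{A}$. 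Here I use that each $S_\alpha$, being an inverse semigroup inside the inverse semigroup $S$, is closed under the inversion of $S$, so also $s_i^{-1}\in S_{\alpha_i}$.

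The heart of the argument is to show that the net is eventually trapped in a single summand. Exploiting continuity of inversion and separate continuity of multiplication, the key computation is the iterated limit
\begin{equation*}
\lim_{i}\lim_{j}\,(s_i)g\big((s_j)g\big)^{-1}=\lim_{i}\,(s_i)g\,x^{-1}=x x^{-1}=e,
\end{equation*}
where for fixed $i$ the inner limit is evaluated through the continuous left translation by $(s_i)g$ together with $\big((s_j)g\big)^{-1}\to x^{-1}$, and the outer one through the continuous right translation by $x^{-1}$. Since $e\in V$, I may fix a single index $i^{*}$ with $(s_{i^{*}})g\,x^{-1}\in V$ and $(s_{i^{*}})g\in W$. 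As the inner limit $\lim_{j}(s_{i^{*}})g\big((s_j)g\big)^{-1}$ equals $(s_{i^{*}})g\,x^{-1}\in V$ and $V$ is open, eventually in $j$ one has $(s_{i^{*}}s_j^{-1})g=(s_{i^{*}})g\big((s_j)g\big)^{-1}\in V$, hence $\notin(T)g$. But if $\alpha_j\neq\alpha_{i^{*}}$ then $s_{i^{*}}s_j^{-1}\in S_{\alpha_{i^{*}}}S_{\alpha_j}\subseteq T$ by hypothesis~$(ii)$, forcing $(s_{i^{*}}s_j^{-1})g\in(T)g$, a contradiction. Therefore $\alpha_j=\alpha_{i^{*}}=:\alpha_0$ eventually, so $(s_j)g\in(S_{\alpha_0})g$ eventually and, by closedness of $(S_{\alpha_0})g$, $x\in(S_{\alpha_0})g\subseteq(S)g$. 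In either branch $x\in(S)g$, hence $(S)g=R$.

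The main obstacle, and the reason the last step is delicate, is exactly that $R$ carries only separately continuous multiplication, so one cannot pass to a joint limit of $(s_i)g\big((s_j)g\big)^{-1}$. The device above circumvents this by freezing one factor at $i^{*}$ and then letting $j$ run, transporting the cross-product condition~$(ii)$ through the closed ideal $(T)g$ to pin the tail of the net inside a single summand. The only routine verifications left are the continuity and ideal claims in the first paragraph and the bookkeeping with the indices $\alpha_i$ and the inverses, which follow from the inverse-subsemigroup structure of the $S_\alpha$.
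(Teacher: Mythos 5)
Your proof is correct, and its main step takes a genuinely different route from the paper's. The paper argues by contradiction: assuming $(S)h$ is a dense proper inverse subsemigroup of $K$, it fixes a point $x\in K\setminus(S)h$, notes that every neighbourhood of $x$ meets infinitely many of the summand images $(S_\alpha)h$, and combines this with hypothesis $(ii)$ and the closed ideal $(T)h$ to show $x\cdot y,\,y\cdot x\in(T)h$ for all $y\in(S)h$; a case analysis on whether $x$ is an idempotent (first excluding idempotent boundary points, so that $E(K)=E((S)h)$, then writing $x=x\cdot(x^{-1}x)$ with $x^{-1}x\in E((S)h)$) then yields $x\in(T)h\subseteq(S)h$, a contradiction. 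You argue directly and avoid the idempotent dichotomy altogether: for a closure point $x\notin(T)g$ you trap the tail of an approximating net inside a single summand image by freezing one index in the iterated limit $\lim_i\lim_j\,(s_i)g\bigl((s_j)g\bigr)^{-1}=xx^{-1}$, which is a legitimate use of separate continuity together with continuous inversion, and you finish with the closedness of $(S_{\alpha_0})g$. The weight is distributed differently across the hypotheses: in the paper the closedness of the individual $(S_\alpha)h$ enters only implicitly (to justify the ``infinitely many summands'' observation) and the closed ideal $(T)h$ delivers the contradiction, whereas in your argument the closed ideal merely separates the two cases and the closedness of a single $(S_{\alpha_0})g$ gives the conclusion. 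Both proofs share the same preliminaries (reduction to a dense image via Proposition~\ref{proposition-2.2}, the promotion of the $(T)$-image to a closed ideal of the ambient semigroup, and hypothesis $(ii)$ forcing cross-products into $T$), and both adapt to the $H$-closed version in Theorem~\ref{theorem-4.4}; the paper's neighbourhood formulation has the side benefit of the intermediate identity $E(K)=E((S)h)$, while yours is shorter and case-free at the cost of some net bookkeeping.
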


\begin{proof}
Suppose to the contrary that there exists a semitopological inverse semigroup $K$ with continuous inversion and continuous homomorphism $h\colon S\to K$ such that the image $(S)h$ is not a closed subsemigroup of $K$. By Lemma~II.1.10 of \cite{Petrich1984}, $(S)h$ is an inverse subsemigroup of $K$. Since by Proposition~\ref{proposition-2.2} the closure $\operatorname{cl}_K((S)h)$ of an inverse subsemigroup $(S)h$ in a semitopological inverse semigroup $K$ with continuous inversion is an inverse semigroup, without loss of generality we may assume that $(S)h$ is a dense proper inverse subsemigroup of $K$.

We observe that the assumption of the theorem states that $T$ is an ideal of $S$. This implies that $(T)h$ is an ideal in $(S)h$. Then by Proposition~I.1.8$(iii)$ of \cite{Ruppert1984} the closure of an ideal of a semitopological semigroup is again an ideal, and hence we get that $(T)h$ is a closed ideal of the semigroup $K$.

We fix an arbitrary $x\in K\setminus (S)h$. Then only one of the following cases holds:
\begin{enumerate}
  \item[$a)$] $x$ is an idempotent of the semigroup $K$;
  \item[$b)$] $x$ is a non-idempotent element of $K$.
\end{enumerate}

First we show that $x\cdot y, y\cdot x\in (T)h$ for every $y\in (S)h$. We fix an arbitrary open neighbourhood $U(x)$ of the point $x$ in the space $K$. Since $U(x)$ intersects infinitely many subsemigroups of $K$ from the family $\{(S_\alpha)h \colon \alpha\in\mathscr{A} \}$ we conclude that $U(x)\cdot y\cap (T)h\neq\varnothing$ and $y\cdot U(x)\cap (T)h\neq\varnothing$ for every $y\in (S)h$. Then the separate continuity of the semigroup operation in $K$ implies that any open neighbourhoods $W(x\cdot y)$ and  $W(y\cdot x)$ of the points $x\cdot y$ and $y\cdot x$ in $K$, respectively, intersect the ideal $(T)h$. This implies that $x\cdot y, y\cdot x\in \operatorname{cl}_K((T)h)$. Since the ideal $(T)h$ is closed in $K$ we conclude that $x\cdot y, y\cdot x\in (T)h$.

Suppose that case $a)$ holds. Then there exists an open neighbourhood $U(x)$ of the point $x$ in the space $K$ such that $U(x)\cap (T)h=\varnothing$ and the neighbourhood $U(x)$ intersects infinitely many semigroups from the family $\{(S_\alpha)h \colon \alpha\in\mathscr{A} \}$. By the separate continuity of the semigroup operation in $K$ we have that for every open neighbourhood $U(x)$ of the point $x$ in $K$ such that $U(x)\cap(T)h=\varnothing$ there exists an open neighbourhood $V(x)$ of $x$ in $K$ such that $x\cdot V(x)\subseteq U(x)$ and $V(x)\cdot x\subseteq U(x)$. Now, the previous part of proof implies that $x\cdot V(x)\cap (T)h\neq\varnothing$ and $V(x)\cdot x\cap (T)h\neq\varnothing$, which contradict the assumption $U(x)\cap(T)h=\varnothing$. The obtained contradiction implies that $E((S)h)=E(K)$.

Suppose that case $b)$ holds. Then there exist idempotents $e$ and $f$ in $(S)h$ such that $xx^{-1}=e$ and $x^{-1}x=f$. We observe that $e,f\notin (T)h$. Indeed, if $e\in (T)h$ or $f\in (T)h$, then we have that
\begin{equation*}
  x=xx^{-1}x=ex\in (T)h \qquad \mbox{and} \qquad x=xx^{-1}x=xf\in (T)h,
\end{equation*}
because $(T)h$ is an ideal of the semigroup $K$. Since $x\in\operatorname{cl}_K((S)h)$, every open neighbourhood of the point $x$ in $K$ intersects infinitely many semigroups from the family $\{(S_\alpha)h\colon\alpha\in \mathscr{A}\}$, and hence we get that
\begin{equation*}
    (U(x)\cdot f)\cap (T)h\neq\varnothing \qquad \mbox{and} \qquad (e\cdot U(x))\cap (T)h\neq\varnothing.
\end{equation*}
Then the Hausdorffness of $K$ and the separate continuity of the semigroup operation in $K$ imply that $x=xx^{-1}x=x\cdot f=e\cdot x\in (T)h$. This contradicts the assumption that $x\neq (T)h$. The obtained contradiction implies the statement of our theorem.
\end{proof}

The proof of Theorem~\ref{theorem-4.4} is similar to the proof of Theorem~\ref{theorem-4.3}.

\begin{theorem}\label{theorem-4.4}
Let $S=\bigcup_{\alpha\in\mathscr{A}}S_{\alpha}$ be a semitopological inverse semigroup with continuous inversion such that
\begin{itemize}
    \item[$(i)$] $S_{\alpha}$ is an $H$-closed semigroup in the class of semitopological inverse semigroups with continuous inversion for any $\alpha\in\mathscr{A}$; and
    \item[$(ii)$] there exists an ideal $T$ of $S$ which is $H$-closed in the class of semitopological inverse semigroups with continuous inversion such that $S_{\alpha}\cdot S_{\beta}\subseteq T$ for all $\alpha\neq\beta$, $\alpha,\beta\in\mathscr{A}$.
\end{itemize}
Then $S$ is an $H$-closed semigroup in the class of semitopological inverse semigroups with continuous inversion.
\end{theorem}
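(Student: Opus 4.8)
The plan is to argue by contradiction, following verbatim the scheme of the proof of Theorem~\ref{theorem-4.3}, but replacing the homomorphic images there by the corresponding embedded subsemigroups here. First I would suppose that $S$ is not $H$-closed, so that there is a semitopological inverse semigroup $K$ with continuous inversion that contains $S$ simultaneously as a subsemigroup and as a topological subspace, with $S$ not closed in $K$. By Proposition~\ref{proposition-2.2} the closure $\operatorname{cl}_K(S)$ is an inverse subsemigroup of $K$, so without loss of generality I may assume that $S$ is a dense proper inverse subsemigroup of $K$, and I fix a point $x\in K\setminus S$.

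The key structural step, and the place where hypotheses $(i)$ and $(ii)$ enter, is to observe that each $S_\alpha$ and the ideal $T$ are closed in $K$. This is exactly where ordinary $H$-closedness suffices, in contrast to Theorem~\ref{theorem-4.3} where absolute $H$-closedness was needed to control a homomorphic image: since $S_\alpha\subseteq S\subseteq K$ and $T\subseteq S\subseteq K$ are genuine inclusions of subsemigroups and of topological subspaces, $K$ itself witnesses the $H$-closedness of each $S_\alpha$ and of $T$, so all of them are closed in $K$. Moreover, by Proposition~I.1.8$(iii)$ of \cite{Ruppert1984} the closure in $K$ of the ideal $T$ is again an ideal, so $T$ is a closed ideal of $K$. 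Since a finite union of closed sets is closed and $x\notin S$, every open neighbourhood of $x$ must meet infinitely many of the subsemigroups $S_\alpha$.

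Granting this, the remainder runs exactly as in Theorem~\ref{theorem-4.3}. Using that every neighbourhood $U(x)$ meets infinitely many $S_\alpha$ together with the hypothesis $S_\alpha\cdot S_\beta\subseteq T$ for $\alpha\neq\beta$, I would first show that $x\cdot y,\,y\cdot x\in T$ for every $y\in S$, by separate continuity and the closedness of $T$. Then I would eliminate the two cases for $x$. If $x$ is an idempotent, I choose $U(x)$ with $U(x)\cap T=\varnothing$ (possible because $T$ is closed and $x\notin T$); then $x=x\cdot x\in U(x)$, so separate continuity yields a neighbourhood $V(x)$ with $x\cdot V(x)\subseteq U(x)$, and applying the previous step to any $y\in V(x)\cap S$ gives $x\cdot y\in T\cap U(x)$, a contradiction; hence $E(S)=E(K)$. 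If $x$ is not an idempotent, then $xx^{-1}=e$ and $x^{-1}x=f$ are idempotents of $K$, so they lie in $S$ by the previous case, and one checks $e,f\notin T$ (otherwise $x=xx^{-1}x$ would lie in the ideal $T$); then the closedness of $T$ and separate continuity force $x\cdot f,\,e\cdot x\in T$, whence $x=xx^{-1}x=x\cdot f=e\cdot x\in T$, contradicting $x\notin S\supseteq T$.

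The only genuinely new point compared with Theorem~\ref{theorem-4.3} is the second paragraph: verifying that the $H$-closedness of the pieces $S_\alpha$ and $T$ upgrades to their closedness inside the concrete ambient semigroup $K$. I expect this to be the main, though short, obstacle; once it is in place every subsequent separate-continuity computation transfers without change.
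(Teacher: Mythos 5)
Your proposal is correct and is essentially the paper's own argument: the paper's entire ``proof'' of Theorem~\ref{theorem-4.4} is the one-line remark that it is similar to the proof of Theorem~\ref{theorem-4.3}, and your write-up is exactly that intended adaptation, replacing the homomorphic images $(S_\alpha)h$, $(T)h$ by the embedded subsemigroups $S_\alpha$, $T$ and observing that plain $H$-closedness (witnessed by the ambient $K$ itself, which contains each $S_\alpha$ and $T$ as a subsemigroup and topological subspace) already yields their closedness in $K$, after which the separate-continuity computations of Theorem~\ref{theorem-4.3} go through unchanged.
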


\begin{theorem}\label{theorem-4.5}
Let a semitopological semigroup $S$ with continuous inversion be the orthogonal sum of a family $\left\{S_{\alpha}\colon \alpha\in\mathscr{I}\right\}$ of semitopological inverse semigroups with zeros. Then $S$ is an $($absolutely$)$ $H$-closed semigroup in the class of semitopological inverse semigroups with continuous inversion if and only if so is any element of the family $\left\{S_{\alpha}\colon \alpha\in\mathscr{I}\right\}$.
\end{theorem}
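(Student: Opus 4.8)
The plan is to split the equivalence into its two implications and to treat the $H$-closed and the absolutely $H$-closed versions in parallel within each.

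For the sufficiency (each $S_{\alpha}$ being (absolutely) $H$-closed yields that $S$ is (absolutely) $H$-closed) I would reduce directly to Theorems~\ref{theorem-4.3} and~\ref{theorem-4.4}. Writing the orthogonal sum as $S=\{0\}\cup\bigcup_{\alpha\in\mathscr{I}}S_{\alpha}^*$, each summand $S_{\alpha}=S_{\alpha}^*\cup\{0\}$ is a subsemigroup, $S=\bigcup_{\alpha\in\mathscr{I}}S_{\alpha}$, and $S_{\alpha}\cdot S_{\beta}\subseteq\{0\}$ for all $\alpha\neq\beta$. Choosing $T=\{0\}$, a one-point ideal which is trivially absolutely $H$-closed (any continuous homomorphism carries it to a single, hence closed, point), verifies hypothesis $(ii)$ of Theorem~\ref{theorem-4.3} in the absolutely $H$-closed case and of Theorem~\ref{theorem-4.4} in the $H$-closed case, while hypothesis $(i)$ is exactly the assumption. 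Thus $S$ is (absolutely) $H$-closed in the class.

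For the necessity I would first record a structural fact: each $S_{\alpha}^*$ is open in $S$. Fixing $x\in S_{\alpha}^*$ and setting $e=xx^{-1}$, $f=x^{-1}x\in S_{\alpha}^*$, the map $\phi\colon S\to S$, $(s)\phi=e\cdot s\cdot f$, is continuous (a composition of a left and a right translation), satisfies $(x)\phi=x$, maps $S$ into $S_{\alpha}$, and sends every $s\notin S_{\alpha}$ to $0$. For an open $V\ni x$ in $S_{\alpha}$ with $0\notin V$ the preimage $(V)\phi^{-1}$ is then an open subset of $S$ contained in $S_{\alpha}^*$, so $S_{\alpha}^*$ is open. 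Consequently the retraction $r_{\alpha}\colon S\to S_{\alpha}$ acting identically on $S_{\alpha}$ and sending $S\setminus S_{\alpha}$ to $0$ is a continuous homomorphism. If now $S$ is absolutely $H$-closed and $g\colon S_{\alpha}\to K$ is any continuous homomorphism into a semitopological inverse semigroup with continuous inversion, then the composite homomorphism $S\to S_{\alpha}\to K$ given by $r_{\alpha}$ followed by $g$ is continuous with image $(S_{\alpha})g$, which is therefore $H$-closed; hence $S_{\alpha}$ is absolutely $H$-closed. As $\alpha$ was arbitrary, the absolutely $H$-closed necessity is complete.

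The $H$-closed necessity is the delicate part, since $H$-closedness concerns embeddings rather than homomorphic images and the retraction does not apply. I would argue by contradiction: assume $S$ is $H$-closed but some $S_{\alpha_0}$ is not, so that $S_{\alpha_0}$ embeds as a dense proper subsemigroup of some semitopological inverse semigroup $K$ with continuous inversion (density may be assumed by Proposition~\ref{proposition-2.2}, and the zero of $S_{\alpha_0}$ is then the zero $0$ of $K$). The idea is to replace the $\alpha_0$-summand of $S$ by $K$: set $\widehat{K}=\{0\}\cup K^*\cup\bigcup_{\beta\neq\alpha_0}S_{\beta}^*$ with the orthogonal-sum multiplication, and topologize it so that each $K^*$ and each $S_{\beta}^*$ is an open subspace while a neighbourhood base at $0$ is furnished by the sets $\{0\}\cup(U'\cap K^*)\cup\bigcup_{\beta\neq\alpha_0}(U\cap S_{\beta}^*)$, where $U$ runs over neighbourhoods of $0$ in $S$ and $U'$ over neighbourhoods of $0$ in $K$. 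Then $S$ embeds into $\widehat{K}$ as a dense proper subsemigroup, contradicting the $H$-closedness of $S$; running this for every index yields that each summand is $H$-closed. The main obstacle is verifying that $\widehat{K}$ belongs to the class, that is, that it is a Hausdorff semitopological inverse semigroup with continuous inversion: the separate continuity of the multiplication and the continuity of the inversion must be checked at $0$ and across the glued pieces, which is delicate precisely because all cross products collapse to $0$. I expect the openness of $K^*$ and of the $S_{\beta}^*$ to reduce each such verification to the already-known continuity inside $K$ and inside $S$, together with the behaviour of the chosen neighbourhood base at $0$.
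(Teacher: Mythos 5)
Your proposal is correct and follows essentially the same route as the paper's own proof: sufficiency is reduced to Theorems~\ref{theorem-4.3} and~\ref{theorem-4.4} via the trivially absolutely $H$-closed ideal $T=\{0\}$; the absolute-$H$-closedness necessity rests on the collapsing homomorphism $S\to S_{\alpha}$ (the paper's map $f$ is exactly your $g\circ r_{\alpha_0}$ composed the other way, phrased as a contradiction rather than directly); and the $H$-closedness necessity uses the identical construction of replacing the summand $S_{\alpha_0}$ by $K$ and gluing the neighbourhood bases at zero. The only divergences are cosmetic: you establish the openness of the summands $S_{\alpha}^*$ by a two-sided translation $s\mapsto e\cdot s\cdot f$ where the paper uses a one-sided one, and both you and the paper leave the check that the glued semigroup $\widehat{K}$ (the paper's $S'$) is a Hausdorff semitopological inverse semigroup with continuous inversion as a routine verification.
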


\begin{proof}
First we observe that if $S$ is a semitopological semigroup with continuous inversion then so is every semigroup from the family $\left\{S_{\alpha}\colon \alpha\in\mathscr{I}\right\}$.

The implication $(\Leftarrow)$ follows from Theorems~\ref{theorem-4.3} and~\ref{theorem-4.4}.

First we shall prove the implication $(\Rightarrow)$ in the case of absolute $H$-closedness.

Suppose to the contrary that there exists an absolute $H$-closed semigroup $S$ in the class of semitopological inverse semigroups with continuous inversion which is an orthogonal sum of a family $\left\{S_{\alpha}\colon \alpha\in\mathscr{I}\right\}$ of semitopological inverse semigroups and there exists a semigroup $S_{\alpha_0}$ in this family such that $S_{\alpha_0}$ is not absolute $H$-closed in the class of semitopological inverse semigroups with continuous inversion. Then there exists a semitopological inverse semigroup $K$ with continuous inversion and continuous homomorphism $h\colon S_{\alpha_0}\to K$ such that the image $(S_{\alpha_0})h$ is not a closed subsemigroup of $K$. By Lemma~II.1.10 of \cite{Petrich1984}, $(S_{\alpha_0})h$ is an inverse subsemigroup of $K$. Since by Proposition~\ref{proposition-2.2} the closure $\operatorname{cl}_K((S_{\alpha_0})h)$ of an inverse subsemigroup $(S_{\alpha_0})h$ in a semitopological inverse semigroup $K$ with continuous inversion is an inverse semigroup, without loss of generality we may assume that $(S_{\alpha_0})h$ is a dense proper inverse subsemigroup of $K$. Also, the semigroup $K$ has zero because $(S_{\alpha_0})h$ contains zero.

We define a map $f\colon S\to K$ by the formula
\begin{equation*}
    (x)f=
\left\{
  \begin{array}{cl}
    0_K, & \hbox{if~} x\in S\setminus S^*_{\alpha_0}; \\
    (x)h, & \hbox{if~} x\in S^*_{\alpha_0},
  \end{array}
\right.
\end{equation*}
where $0_K$ is zero of the semigroup $K$. Simple verifications show that so defined map $f$ is a continuous homomorphism, but the image $(S)f=(S_{\alpha_0})h$ is a dense proper subsemigroup of $K$. This contradicts the assumption that the semigroup $S$ is absolutely $H$-closed semigroup in the class of semitopological inverse semigroups with continuous inversion.

Now, we suppose that there exists an $H$-closed semigroup $S$ in the class of semitopological inverse semigroups with continuous inversion which is an orthogonal sum of a family $\left\{S_{\alpha}\colon \alpha\in\mathscr{I}\right\}$ of semitopological inverse semigroups and there exists a semigroup $S_{\alpha_0}$ in this family such that $S_{\alpha_0}$ is not $H$-closed in the class of semitopological inverse semigroups with continuous inversion. Then there exists a semitopological inverse semigroup $K$ with continuous inversion such that $S_{\alpha_0}$ is not a closed subsemigroup of $K$. Since by Proposition~\ref{proposition-2.2} the closure $\operatorname{cl}_K(S_{\alpha_0})$ of an inverse subsemigroup $S_{\alpha_0}$ in a semitopological inverse semigroup $K$ with continuous inversion is an inverse semigroup, without loss of generality we may assume that $S_{\alpha_0}$ is a dense proper inverse subsemigroup of $K$.

Next, we put $S^{\prime}$ be the orthogonal sum of the family $\left\{S_{\alpha}\colon \alpha\in\mathscr{I}\setminus\{\alpha_0\}\right\}$ and the semigroup $K$. We determine a topology $\tau$ on $S^{\prime}$ in the following way.

First we observe if the orthogonal sum $T=\sum_{i\in\mathscr{J}}T_j$ is an inverse Hausdorff semitopological semigroup, then for every non-zero element $t\in T_j\subset T$ there exists an open neighbourhood $U(t)$ of $t$ in $T$ such that $U(t)\subseteq T_j^*$. Indeed, for every open neighbourhood $W(t)\not\ni 0$ of $t$ in $T$ there exists an open neighbourhood $U(t)$ of $t$ in $T$ such that $tt^{-1}\cdot U(t)\subseteq W(t)$. The neighbourhood $U(t)$ is requested.

We put that the bases of topologies at any point $s$ of $S\setminus S_{\alpha_0}$ and of $S^{\prime}\setminus K$ coincide in $S$ and in $S^{\prime}$, respectively. Also the bases at any point $s$ of subspace $K^*\subseteq S^{\prime}$ coincide with the base at the point $s$ of $K^*$. The following family determines the base of the topology $\tau$ at zero of the semigroup $S^{\prime}$:
\begin{equation*}
\begin{split}
  \mathscr{B}_0=\big\{U\subseteq S^{\prime}\colon & \mbox{there exist an element } V \mbox{~of the base at zero of the topology of~} S \\
    & \mbox{and an element~} W \mbox{~of the base at zero of the topology of~} K \mbox{~such that} \\
    & U\cap S^{\prime}\setminus K=V\cap S\setminus S_{\alpha_0}, \; U\cap K=W \mbox{~and~} U\cap S_{\alpha_0}=W\cap S_{\alpha_0}\big\}.
\end{split}
\end{equation*}
Simple verifications show that $(S^{\prime},\tau)$ is a Hausdorff semitopological inverse semigroup with continuous inversion and moreover $S$ is a dense proper inverse subsemigroup of $(S^{\prime},\tau)$, which contradicts the assumption of our theorem. The obtained contradiction implies the statement of the theorem.
\end{proof}

Theorem~\ref{theorem-4.5} implies the following corollary:

\begin{corollary}\label{corollary-4.6}
A primitive Hausdorff semitopological inverse semigroup $S$ is $($absolutely$)$ $H$-closed in the class of semitopological inverse semigroups with continuous inversion if and only if so is every its maximal subgroup $G$ with adjoined zero with an induced topology from $S$.
\end{corollary}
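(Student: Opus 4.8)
The plan is to realize any primitive Hausdorff semitopological inverse semigroup $S$ as an orthogonal sum of Brandt semigroups, and then to apply Theorem~\ref{theorem-4.5} together with the Brandt-extension criteria of Theorems~\ref{theorem-4.1} and~\ref{theorem-4.2}. First I would invoke the algebraic structure theory: by Theorem~II.4.3 of \cite{Petrich1984} (quoted in the introduction), a primitive inverse semigroup is precisely an orthogonal sum $S=\sum_{\iota\in\mathscr{I}}S_{\iota}$ of Brandt semigroups, and each Brandt semigroup $S_{\iota}$ is, by the discussion in the introduction, a Brandt $\lambda_{\iota}^0$-extension $B_{\lambda_{\iota}}^0(G_{\iota}^0)$ of a group $G_{\iota}$ with adjoined zero, where $G_{\iota}$ is a maximal subgroup of $S$. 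Since $S$ is a semitopological semigroup with continuous inversion, each summand $S_{\iota}$ inherits this structure as a (closed) subsemigroup, so each $S_{\iota}$ is a topological Brandt $\lambda_{\iota}^0$-extension of $G_{\iota}^0$ in the sense of Definition~\ref{def2}.

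Next I would apply Theorem~\ref{theorem-4.5}: $S$ is (absolutely) $H$-closed in the class of semitopological inverse semigroups with continuous inversion if and only if every summand $S_{\iota}$ is. It then remains to identify, for each fixed $\iota$, when the Brandt semigroup $S_{\iota}=B_{\lambda_{\iota}}^0(G_{\iota}^0)$ is (absolutely) $H$-closed. This is exactly the content of Theorems~\ref{theorem-4.1} and~\ref{theorem-4.2}: taking $S$ there to be the monoid-with-zero $G_{\iota}^0$, a topological Brandt $\lambda^0$-extension of $G_{\iota}^0$ is (absolutely) $H$-closed if and only if $G_{\iota}^0$ itself is (absolutely) $H$-closed in the class of semitopological inverse semigroups with continuous inversion. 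Combining these equivalences yields that $S$ is (absolutely) $H$-closed if and only if each $G_{\iota}^0$ is, which is the asserted statement once we read $G_{\iota}$ as ``its maximal subgroup $G$ with adjoined zero.''

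The main subtlety to check is the case distinction in the cardinal $\lambda_{\iota}$. Theorems~\ref{theorem-4.1} and~\ref{theorem-4.2} are stated for cardinals $\lambda\geqslant 2$, whereas a Brandt summand with a single nonzero $\mathscr{H}$-class corresponds to $\lambda_{\iota}=1$, in which case $B_1^0(G_{\iota}^0)=G_{\iota}^0$ and there is nothing to prove beyond the tautology that $G_{\iota}^0$ is (absolutely) $H$-closed iff itself is. So the $\lambda_{\iota}=1$ summands are handled directly, and the $\lambda_{\iota}\geqslant 2$ summands are handled by Theorems~\ref{theorem-4.1} and~\ref{theorem-4.2}; I expect this bookkeeping to be the only real obstacle, as the heavy lifting is already carried by Theorem~\ref{theorem-4.5} and the Brandt-extension equivalences.
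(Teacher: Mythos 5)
Your route coincides with the paper's: the paper derives this corollary from Theorem~\ref{theorem-4.5} (the orthogonal-sum criterion), implicitly combined with the decomposition of a primitive inverse semigroup into an orthogonal sum of Brandt semigroups and with the Brandt-extension criteria of Theorems~\ref{theorem-4.1} and~\ref{theorem-4.2}; your write-up makes these ingredients explicit, and the $\lambda_\iota=1$ bookkeeping is a harmless addition. The direction you do get right is $(\Leftarrow)$: if every $G_\iota^0$ is (absolutely) $H$-closed, then implication $(i)\Rightarrow(iii)$ of Theorems~\ref{theorem-4.1} and~\ref{theorem-4.2} applies to \emph{every} topological Brandt $\lambda_\iota^0$-extension of $G_\iota^0$, hence in particular to the summand $S_\iota$ with its topology induced from $S$ (which is such an extension by Definition~\ref{def2}), and Theorem~\ref{theorem-4.5} then yields the (absolute) $H$-closedness of $S$.

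The gap is in the direction $(\Rightarrow)$. You paraphrase Theorems~\ref{theorem-4.1} and~\ref{theorem-4.2} as saying that ``a topological Brandt $\lambda^0$-extension of $G_\iota^0$ is (absolutely) $H$-closed if and only if $G_\iota^0$ itself is,'' but this is stronger than what they assert: their condition $(ii)$ demands that \emph{every} topological Brandt $\lambda^0$-extension of $G_\iota^0$, for some fixed cardinal $\lambda\geqslant 2$, be (absolutely) $H$-closed. After applying Theorem~\ref{theorem-4.5} you know only that \emph{one} such extension is (absolutely) $H$-closed, namely $S_\iota$ with the topology induced from $S$, and this does not verify hypothesis $(ii)$. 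The proof of $(ii)\Rightarrow(i)$ in Theorem~\ref{theorem-4.1} makes the issue concrete: from a failure of (absolute) $H$-closedness of the base monoid it manufactures a failure only for one \emph{specific} ``coordinatewise'' topology $\tau_S^B$ on $B^0_{\lambda_\iota}(G_\iota^0)$, and $\tau_S^B$ need not coincide with the topology of $S_\iota$; for instance, when $G_\iota$ is trivial, $\tau_S^B$ is the discrete topology on the semigroup of $\lambda\times\lambda$-matrix units, whereas the induced topology on a summand can be the compact topology of Remark~\ref{remark-4.9}. To close this direction one must show that a dense proper embedding of $G_\iota^0$ into a semitopological inverse semigroup $K$ with continuous inversion can be blown up to a dense proper embedding of $S_\iota$, \emph{with its given topology}, into a suitable topologization of $B^0_{\lambda_\iota}(K)$; this is delicate when $\lambda_\iota$ is infinite, because neighbourhoods of zero in $S_\iota$ need not be ``uniform'' across the copies $S_{\alpha,\beta}$, unlike those of $\tau_S^B$. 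In fairness, the paper's own one-line derivation of the corollary from Theorem~\ref{theorem-4.5} passes over exactly the same point, so your proposal reproduces the intended argument together with its unresolved step.
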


\begin{remark}\label{remark-4.7}
We observe that the statements of Theorems~\ref{theorem-4.3}, \ref{theorem-4.4} and ~\ref{theorem-4.5} hold for $H$-closed and absolute $H$-closed semitopological semilattices in the class of semitopological semilattices.
\end{remark}

\begin{theorem}\label{theorem-4.8}
An infinite semitopological semigroup of $\lambda\times\lambda$-matrix units $B_\lambda$ id $H$-closed in the class of semitopological semigroups if and only if the space $B_\lambda$ is compact.
\end{theorem}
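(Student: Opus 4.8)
The plan is to prove both implications, the forward one by contraposition, after first pinning down the topology of any Hausdorff semitopological semigroup structure on $B_\lambda$. Write the non-zero elements of $B_\lambda$ as pairs $(\alpha,\beta)$ with $(\alpha,\beta)(\gamma,\delta)=(\alpha,\delta)$ if $\beta=\gamma$ and $0$ otherwise, and denote by $R_\mu=\{(\mu,\delta):\delta\in\lambda\}$ and $C_\nu=\{(\gamma,\nu):\gamma\in\lambda\}$ the rows and columns of $D=B_\lambda\setminus\{0\}$. The implication $(\Leftarrow)$ is immediate: a compact subset of a Hausdorff space is closed, so a compact $B_\lambda$ is closed in every Hausdorff semitopological semigroup containing it.

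For the structural step I would first show that every non-zero element $(\alpha,\beta)$ is isolated. The map $\phi\colon x\mapsto(\alpha,\alpha)\cdot x\cdot(\beta,\beta)$ is continuous (a composition of one left and one right translation) and satisfies $\phi((\alpha,\beta))=(\alpha,\beta)$ while $\phi(x)=0$ for every other $x$. If a net $x_i\to(\alpha,\beta)$, then $\phi(x_i)\to(\alpha,\beta)$; since $\{0,(\alpha,\beta)\}$ is a two-point, hence discrete, Hausdorff subspace, $\phi(x_i)$ is eventually $(\alpha,\beta)$, forcing $x_i$ to be eventually $(\alpha,\beta)$. Thus $D$ is an open discrete subspace and the whole topology is carried by the neighbourhood filter $\mathscr F$ of $0$. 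Feeding the cover $\{\{d\}:d\in D\}\cup\{U\}$ (for $U$ a neighbourhood of $0$) into the definition of compactness then yields that $B_\lambda$ is compact if and only if every neighbourhood of $0$ is cofinite. Consequently $B_\lambda$ is non-compact exactly when there is an open $U_0\ni 0$ whose complement $A=B_\lambda\setminus U_0$ is infinite (this covers the case of isolated $0$ by taking $U_0=\{0\}$).

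Assuming $B_\lambda$ non-compact, I would build a Hausdorff semitopological semigroup $T$ in which $B_\lambda$ is a dense proper subsemigroup, contradicting $H$-closedness. Separate continuity forces constraints on $\mathscr F$ (each right translation $\rho_{(\beta,\eta)}$ and left translation $\lambda_{(\zeta,\mu)}$ must be continuous at $0$), and the construction splits according to a König-type dichotomy applied to the complements $A$. If some such $A$ is not contained in a finite union of rows and columns --- equivalently, by a maximal-matching argument, $A$ contains an infinite \emph{diagonal} with pairwise distinct rows and columns --- I would adjoin a single point $a$ with $a\cdot s=s\cdot a=0$ for all $s$, topologised by a free ultrafilter $\mathscr U$ on $D$ with $\mathscr U\supseteq\{D\setminus R_\mu\}_{\mu}\cup\{D\setminus C_\nu\}_{\nu}\cup\{A\}$ (such $\mathscr U$ exists precisely because $A$ meets the complement of every finite family of rows and columns). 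The inclusions $D\setminus R_\mu,\,D\setminus C_\nu\in\mathscr U$ make $s\cdot d$ and $d\cdot s$ equal to $0$ for $\mathscr U$-almost all $d$, so every translation is continuous at $a$, while $A\in\mathscr U$ and $A\cap U_0=\varnothing$ separate $a$ from $0$.

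In the remaining case every neighbourhood complement lies in finitely many rows and columns; then, since some complement is infinite, at least one row or column fails to converge to $0$. Here a single sink point cannot work, since a left translate $(\sigma,\mu)\cdot(\mu,\ast)$ of an adjoined limit must again be a limit of the net $(\sigma,\delta)_{\delta}$, which need not converge to $0$. Instead I would realise the missing limits as the matrix units $(\mu,\ast)$ and $(\ast,\nu)$ over the enlarged index set $\lambda\cup\{\ast\}$, retaining only those infinity symbols whose defining row or column does not already converge to $0$ (identifying the others with $0$) and topologising $(\mu,\ast)$ as the limit of row $\mu$ along a fixed ultrafilter of columns. The main obstacle is exactly the verification of separate continuity of this enlarged multiplication: one must choose the ultrafilters on the row and column indices \emph{coherently}, so that the matrix identity $(\sigma,\mu)(\mu,\ast)=(\sigma,\ast)$ is matched by genuine convergence of the translated nets, while preserving Hausdorffness between the various adjoined limits and $0$. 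Once $T$ is shown to be a Hausdorff semitopological semigroup with $B_\lambda$ dense and proper, $H$-closedness fails, which completes the contrapositive.
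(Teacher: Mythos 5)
Your reduction (all non-zero points isolated, non-compactness equivalent to the existence of an infinite set $A=B_\lambda\setminus U_0$ missed by a neighbourhood $U_0$ of $0$) agrees with the paper, and your Case~A is correct and complete: adjoining a single annihilating point $a$ topologised by a free ultrafilter containing $\{D\setminus R_\mu\}_\mu\cup\{D\setminus C_\nu\}_\nu\cup\{A\}$ makes every translation carry an ultrafilter-large set to $\{0\}$, and the finite intersection property is exactly your hypothesis that $A$ is not covered by finitely many rows and columns. This is, if anything, cleaner than the paper's corresponding case, which inductively extracts a ``diagonal'' sequence and uses its cofinite filter; your dichotomy is also organized more safely than the paper's trichotomy, whose third case as literally stated admits sets like $A=R_1\cup C_1$ from which no infinite diagonal can be extracted (such sets are handled there only because one can pass to $A\cap R_1$ and fall back on the row case).

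The genuine gap is your Case~B, which is precisely the case the paper's constructions 1) and 2) exist for, and you leave its core unproved: you state that ``the main obstacle is exactly the verification of separate continuity'' and stop there, but that verification \emph{is} this case. Worse, the plan you sketch fails as stated: if you adjoin both column-type symbols $(\mu,\ast)$ and row-type symbols $(\ast,\nu)$ over $\lambda\cup\{\ast\}$ and ``identify with $0$'' the missing unit $(\ast,\ast)$, associativity breaks, since $\bigl((\ast,\nu)(\nu,\ast)\bigr)(\ast,\nu')=0\cdot(\ast,\nu')=0$ while $(\ast,\nu)\bigl((\nu,\ast)(\ast,\nu')\bigr)=(\ast,\nu)(\nu,\nu')=(\ast,\nu')$. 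The paper's resolution is strictly one-sided: some single row $R_{i_0}$ meets $A$ in an infinite set $A_{i_0}$, which is clopen because non-zero points are isolated and $A$ is closed; one adjoins \emph{only} the new column $a_i=(i,\ast)$, $i\in\lambda$ (the set $B_\lambda\cup\{a_i\colon i\in\lambda\}\cup\{0\}$ is closed under the matrix-unit product, so associativity is automatic), and topologises $a_i$ by the cofinite filter on the translated copy $A_i=(i,i_0)\cdot A_{i_0}$. Using the \emph{same} column index set in every row is exactly the coherence you were looking for: the identity $(s,i)\cdot a_i=a_s$ is matched by the index-preserving bijection $(i,j)\mapsto(s,j)$ of $A_i$ onto $A_s$, and each $A_i$ is clopen by continuity of the translation $x\mapsto(i_0,i)\cdot x$ in $B_\lambda$, which yields Hausdorffness; no ultrafilters and no discarded symbols are needed. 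One detail still requires care (in your sketch, and indeed in the paper's own write-up): right translation by $a_i$ sends $W$ to a set containing $a_s$ whenever $(s,i)\in W$, so if some column accumulates at $0$ the neighbourhoods of $0$ in the extension must be enlarged to absorb all but finitely many of the points $a_s$; with that adjustment the one-sided construction goes through. Without a completed Case~B your argument does not prove the theorem.
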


\begin{proof}
Implication $(\Leftarrow)$ is trivial.

$(\Rightarrow)$. Suppose to the contrary that there exists a Hausdorff non-compact topology $\tau_B$ on the semigroup $B_\lambda$ such that $(B_\lambda,\tau_B)$ is an $H$-closed semigroup in the class of semitopological semigroups. By Lemma~2 of \cite{GutikPavlyk2005a} every non-zero element of $B_\lambda$ is an isolated point in $(B_\lambda,\tau_B)$. Then there exists an infinite open-and-closed subset $A\subseteq B_\lambda\setminus\{0\}$.

Then we have that at least one of the following cases holds:
\begin{itemize}
  \item[1)] there exist finitely many $i_1,\ldots,i_n\in\lambda$ such that if $(i,j)\in A$ then $i\in\{i_1,\ldots,i_n\}$;
  \item[2)] there exist finitely many $j_1,\ldots,j_n\in\lambda$ such that if $(i,j)\in A$ then $i\in\{j_1,\ldots,j_n\}$;
  \item[3)] cases 1) and 2) don't hold.
\end{itemize}

Suppose case 1) holds. Then there exists an element $i_0\in \{i_1,\ldots,i_n\}$ such that the set $\left\{(i_0,j)\colon j\in\lambda\right\}\cap A$ is infinite. We denote $A_{i_0}=\left\{(i_0,j)\in B_\lambda\colon (i_0,j)\in A\right\}$. It is obvious that $A_{i_0}$ is infinite subset of the semigroup $B_\lambda$. By Lemma~2 of \cite{GutikPavlyk2005a} every non-zero element of $B_\lambda$ is an isolated point in $(B_\lambda,\tau_B)$ and hence $A_{i_0}$ is an open-and-closed subset in the topological space $(B_\lambda,\tau_B)$. Since the left shift $l_{(i_0,i)}\colon B_\lambda\to B_\lambda\colon x\mapsto (i_0,i)\cdot x$ is a continuous map for any $i\in\lambda$, $A_{i}=\left\{(i,j)\in B_\lambda\colon (i_0,j)\in A\right\}$ is an infinite open-and-closed subset in $(B_\lambda,\tau_B)$ for every $i\in\lambda$. This implies that the set $B_\lambda\setminus\left\{A_{i_1}\cup\cdots\cup A_{i_k}\right\}$ is an open neighbourhood of the zero in $(B_\lambda,\tau_B)$ for every finite subset $\{i_1,\ldots,i_k\}\subset \lambda$.

Now. for every $i\in\lambda$ we put $a_i\notin B_\lambda$. We extend the semigroup operation from $B_\lambda$ onto the set $S=B_\lambda\cup\left\{a_i\colon i\in\lambda\right\}$ in the following way:
\begin{itemize}
  \item[$(i)$] $a_i\cdot a_j=a_i\cdot 0=0\cdot a_i=0$ for all $i,j\in\lambda$;
  \item[$(ii)$] $(s,p)\cdot a_i=
  \left\{
    \begin{array}{ll}
      a_s, & \hbox{if~} p=i;\\
      0, & \hbox{if~} p\neq i
    \end{array}
  \right.
  $ for all $(s,p)\in B_\lambda\setminus \{0\}$ and $i\in\lambda$;
  \item[$(iii)$] $a_i\cdot (s,p)=0$ for all $(s,p)\in B_\lambda\setminus \{0\}$ and $i\in\lambda$.
\end{itemize}
Simple verifications show that so defines binary operation on $S$ is associative, and hence $S$ is a semigroup.

Next, we define a topology $\tau_S$ on the semigroup $S$ in the following way. For every element $x\in B_\lambda$ we put that bases of topologies $\tau_B$ and $\tau_S$ at the point $x$ coincide. Also, for every $i\in\lambda$ we put
\begin{equation*}
    \mathscr{B}_S(a_i)=\left\{\{a_i\}\cup C_i\colon C_i \mbox{~is a cofinite subset of~} A_i\right\}
\end{equation*}
is a base of the topology $\tau_S$ at the point $a_i\in S$. It is obvious that $(S,\tau_S)$ is a Hausdorff topological space. The separate continuity of the semigroup operation in $(S,\tau_S)$ follows from the cofinality of the set $C_i$ in $A_i$ for each $i\in\lambda$. Therefore we get that the semitopological semigroup $(B_\lambda,\tau_B)$ is a dense proper subsemigroup of $(S,\tau_S)$, which contradicts the assumption of the theorem.

In case 2) the proof is similar.

Suppose that cases 1) and 2) don't hold. By induction we construct an infinite sequence $\left\{(x_i,y_i)\right\}_{i\in\mathbb{N}}$ in $B_\lambda$ in the following way. First we fix an arbitrary element $(x,y)\in A$ and denote $(x_1,y_1)=(x,y)$. Suppose that for some positive integer $n$ we construct the finite sequence $\left\{(x_i,y_i)\right\}_{i=1,\ldots,n}$. Since the set $A$ is infinite and cases 1) and 2) don't hold, there exists $(x,y)\in A$ such that $x\notin\{y_1,\ldots,y_n\}$ and $y\notin\{x_1,\ldots,x_n\}$. Then we put $(x_{n+1},y_{n+1})=(x,y)$.

Let $a\notin B_\lambda$. We put $T=B_\lambda\cup\{a\}$ and extend the semigroup operation from $B_\lambda$ onto $T$ in the following way:
\begin{equation*}
    a\cdot x=x\cdot a=a\cdot a=0, \; \mbox{~for every~} x\in B_\lambda.
\end{equation*}

Next, we define a topology $\tau_T$ on the semigroup $T$ in the following way. For every element $x\in B_\lambda$ we put that bases of topologies $\tau_B$ and $\tau_T$ at the point $x$ coincide. Also, we put
\begin{equation*}
    \mathscr{B}_T(a)=\left\{\{a\}\cup C\colon C \mbox{~is a cofinite subset of the set~} \left\{(x_i,y_i)\colon i\in\mathbb{N}\right\}\right\}
\end{equation*}
is a base of the topology $\tau_T$ at the point $a\in T$. It is obvious that $(T,\tau_T)$ is a Hausdorff topological space, the semigroup operation in $(T,\tau_T)$ is separately continuous, and $B_\lambda$ is a dense subsemigroup of $(T,\tau_T)$. This contradicts the assumption of the theorem.

The obtained contradictions imply the statement of our theorem.
\end{proof}

\begin{remark}\label{remark-4.9}
By Theorem~2~\cite{GutikPavlyk2005a} for every infinite cardinal $\lambda$ there exists a unique Hausdorff pseudocompact topology $\tau_c$ on the semigroup $B_\lambda$ such that $(B_\lambda,\tau_c)$ is a semitopological semigroup. This topology is compact and it is described in Example~1 of \cite{GutikPavlyk2005a}.
\end{remark}
%%%%%%%%%%%%%%%%%%%%%%%%%%%%%%%%%%%%%%%%%%%%%%%%%%%%%%%%%%%%

\section*{Acknowledgements}

The author acknowledges Oleksandr Ravskyi for his comments and suggestions.

%%%%%%%%%%%%%%%%%%%%%%%%%%%%%%%%%%%%%%%%%%%%%%%%%%%%%%%%%%%%

\end{document}